\title{Projective Nullstellensatz for not necessarily algebraically closed fields}
\author{Rati Ludhani\vspace{-6mm}}
	\address{Inria Saclay Centre, 1 Rue Honoré d’Estienne d’Orves, 91120 Palaiseau, France}
\email{\href{mailto:ratiludhani@gmail.com}{ratiludhani@gmail.com}}
\thanks{During the course of this work, Rati Ludhani was supported by Prime Minister's Research Fellowship PMRF-192002-256 at IIT Bombay.}
\date{}
\newtheorem{theorem}{Theorem}
\newtheorem{prop}[theorem]{Proposition}
\newtheorem{defn}[theorem]{Definition}
\newtheorem{conjecture}[theorem]{Conjecture}
\newtheorem{cor}[theorem]{Corollary}
\newtheorem{remark}[theorem]{Remark}
\newcommand{{\Fq}}{\mathbb{F}_q}
\newcommand{\F}{\mathbb{F}}
\newcommand{\N}{\mathbb{N}}
\newcommand{\PP}{\mathbb{P}}
\newcommand{\Z}{\mathbb{Z}}
\newcommand{\ZZ}{\mathsf{Z}}
\newcommand{\V}{\mathsf{V}}
\newcommand{\I}{\mathsf{I}}
\newcommand{\A}{\mathbb{A}}
\newcommand{\m}{\mathfrak{m}}
\begin{document}
\maketitle
\centerline{\emph{Dedicated to Professor Sudhir R. Ghorpade on the occasion of his sixtieth birthday.}}

\begin{abstract}
The Nullstellensatz, proved by Hilbert in 1893, is a classical result that holds when the base field is algebraically closed. When the base field is finite, a version of Hilbert's Nullstellensatz is given by Terjanian in 1966. Laksov in 1987 generalized Hilbert's Nullstellensatz to a  $K$-Nullstellensatz when the base field $K$ is not necessarily algebraically closed. However, unlike Tarjanian's Nullstellensatz, Laksov's Nullstellensatz is not very explicit. Later, Laksov and Westin in 1990 proposed a strengthening to Laksov's Nullstellensatz in the form of four conjectures. 

A projective analogue of Nullstellensatz of the classical Nullstellensatz of Hilbert is well-known for projective varieties over algebraically closed fields. For finite fields, the projective analogue of the Nullstellensatz can be derived as an application of Hilbert's Nullstellensatz, though it is not as efficient as Terjanian's Nullstellensatz. Gimenez, Ruano and San-Jos\'e in 2023 strengthened this result by identifying a computationally efficient set. Here, we introduce an even more efficient set that establishes the projective analogue of the Nullstellensatz for finite fields. Additionally, we provide counterexamples to three of the conjectures of Laksov and Westin.
 
\end{abstract}

\section{Introduction}

Let $k$ be a field and let $\bar{k}$ denote an algebraic closure of $k$. Let $K$ be a subfield of $\bar{k}$ containing $k$. 
For a prime power $q$, denote by $\Fq$ the Galois field with $q$ elements. Fix a nonnegative integer $n$. Let $k[X_1,\ldots,X_n]$ (resp. $k[X_0,X_1,\ldots,X_n]$) denote the polynomial ring in $n$ (resp. $n+1$) variables over the field $k$. Hilbert Nullstellensatz, or Hilbert zero-point-theorem, is a classical result of Mathematics that establishes a relation between affine algebraic varieties  and ideals of polynomial rings. 
More precisely, it states that:

\begin{center}
\begin{tabular}{ m{34em} }
  Suppose $K=\bar{k}$ and let $I\subseteq k[X_1,\ldots,X_n]$ be an ideal. Then the vanishing ideal of the set of 
  common zeros of the polynomials of $I$ in $K^n$ is the radical of $I$. 
\end{tabular}
\end{center}

An exemplary proof of this theorem involves first proving its weak form and then applying the Rabinowitsch trick~\cite{R}. The weak form of the Hilbert Nullstellensatz is a straightforward corollary of the Hilbert Nullstellensatz, which states that:
\begin{center}
\begin{tabular}{ m{34em} }
  Suppose $K=\bar{k}$. Then the ideal $I\subseteq k[X_1,\ldots,X_n]$ contains $1$ if and only if the polynomials of $I$ have no common zeros in $K^n$.
\end{tabular}
\end{center}

Significant research has focused on generalizing the Hilbert Nullstellensatz to arbitrary fields and deriving explicit formulas. For fields $K\subsetneq \bar{k}$, i.e., for non-algebraically closed fields, even the weak form may fail to hold. In these cases, the vanishing ideal of the set of common zeros in $K^n$ of the polynomials in the ideal $I$ of $k[X_1,\ldots,X_n]$ can generally be larger than the radical of $I$. In this context, Dubois~\cite{D} and Risler~\cite{Ri} introduced an ideal known as the real radical of $I$ for the Nullstellensatz in the case of real closed fields $K$. Building on this, Adkins, Gianni, and Tognoli~\cite{AGT}, and Laksov~\cite{L} introduced the notion of the $K$-radical, which they used to formulate the Nullstellensatz for arbitrary fields $K$, referring to it as the \emph{Hilbert $K$-Nullstellensatz}. The proof by Laksov~\cite{L} for the Hilbert $K$-Nullstellensatz follows a similar strategy: first proving its weak form and then applying the Rabinowitsch trick~\cite{R}. A much weaker corollary of the Hilbert Nullstellensatz states that:
\begin{center}
\begin{tabular}{ m{34em} }
  Suppose $K=\bar{k}$. If $f\in k[X_1,\ldots,X_n]$ is such that $f({\bf a})=0$ for every ${\bf a}\in K^n$, then $f=0$.
\end{tabular}
\end{center}

We refer to it as the \emph{Very Weak Nullstellensatz}, following Ghorpade~\cite{G}. The terminology may be motivated by the fact that this result holds even when $K$ is an infinite field. However, the Very Weak Nullstellensatz does not hold when $K$ is a finite field. 
Indeed, if $K=\Fq$, the nonzero polynomials $X_1^q-X_1,\ldots, X_n^q-X_n$ in $k[X_1,\ldots,X_n]$ vanish at every point in $K^n$. 
Nonetheless, an explicit formula for the Nullstellensatz over a finite field $K$ is known, thanks to Tarjanian \cite{T}, which states that:
\begin{center}
\begin{tabular}{ m{34em} }
	Suppose $K=\Fq$ and let $I\subseteq k[X_1,\ldots,X_n]$ be an ideal. Then the vanishing ideal    of the set of common zeros of the polynomials of $I$ in $K^n$ is the ideal $I_q:=I+\left<X_1^q-X_1,\ldots,X_n^q-X_n\right>$. 
\end{tabular}
\end{center}

In line with the terminology of Ghorpade~\cite{G}, we call it the \emph{Affine $\Fq$-Nullstellensatz}. For a comprehensive survey of this result, we refer to the expository article by Ghorpade~\cite{G}. Moreover, this result indicates that the ideal $I_q$ is radical.

The projective Nullstellensatz is an analogue of the Nullstellensatz for projective algebraic varieties and homogeneous ideals of polynomial rings. In the case of an algebraically closed field $K$, it is well-known and can be readily derived as a consequence of the Hilbert Nullstellensatz, particularly by noting that the radical of a homogeneous ideal is also a homogeneous ideal. 
In this article, we remark that when $K$ is infinite, a projective Nullstellensatz can be obtained using the Hilbert $K$-Nullstellensatz. In the case of finite fields, Mercier and Rolland~\cite{MR} proved the projective analogue of the Very Weak Nullstellensatz. 
A simpler proof of this result is presented in Ghorpade~\cite{G}, utilizing the concept of projective reduction which was introduced in Beelen, Datta and Ghorpade~\cite{BDG}. Furthermore,  
using the projective analogue of the Hilbert Nullstellensatz, one can derive the projective Nullstellensatz for finite fields, which states that:
\begin{center}
\begin{tabular}{ m{34em} }
Suppose $K=\Fq$ and let $I\subseteq k[X_0,X_1,\ldots,X_n]$ be a homogeneous ideal. Then the vanishing ideal of the set of common zeros of the polynomials of $I$ in the $n$-dimensional projective space over $K$ is the radical of the homogeneous ideal $I_q^*:=I+\left<X_i^qX_j-X_j^q X_i: 0\le i<j\le n\right>$. 
\end{tabular}
\end{center}

For $k=K=\Fq$, a direct proof of this result can be found in Jaramillo, Vaz Pinto and Villarreal~\cite[Theorem 3.13]{JVV}. Beelen, Datta and Ghorpade in \cite{BDG} showed that, unlike in the affine case, the ideal $I_q^*$ is not always radical by providing an illustrative example. To strengthen the projective Nullstellensatz for $k=K=\Fq$, Gimenez, Ruano and San-Jos\'e~\cite{GRS} proved that the saturation of the ideal $I_q^*$ with respect to the homogeneous maximal ideal $\left<X_0,\ldots,X_n\right>$ of $k[X_0,\ldots,X_n]$ also provides a projective Nullstellensatz, which is computationally more efficient. We improve their result by proving that the ideal quotient of $I_q^*$ with respect to the ideal $\left<X_0^d,\ldots,X_n^d\right>$ in $k[X_0,\ldots,X_n]$ for a fixed positive integer $d$ suffices for the projective Nullstellensatz over finite fields in the general setting $k\subseteq K$. It further leads to a direct and elegant alternative proof of the projective Nullstellensatz for finite fields.

Finding the most effective form of the Nullstellensatz for an arbitrary field is still a subject of research. Although we have the Hilbert $K$-Nullstellensatz, the $K$-radical is an abstract set that is not easy to determine. Laksov and Westin~\cite{LW} conjectured four Nullstellens\"atze to strengthen the Hilbert $K$-Nullstellensatz. We prove that three of these conjectures are not true by providing a common counterexample. 

The content of the paper is organized as follows. In Section 2, we recall several Nullstellens\"atze, including the Hilbert $K$-Nullstellensatz and the Affine $\Fq$-Nullstellensatz. We also provide an outline of Laksov's proof of the Hilbert $K$-Nullstellensatz. In Section~3, we present our results on the projective Nullstellensatz for arbitrary fields. Finally, in Section~4, we discuss the conjectures of Laksov and Westin on Nullstellens\"atze for arbitrary fields.

\section{Preliminaries}

Throughout this paper, let $k$ be a field and let $\bar{k}$ be an algebraic closure of $k$. 
Suppose $K$ is a subfield of $\bar{k}$ containing $k$. Let $q$ be a prime power and let $\Fq$ denote the finite field with $q$ elements. Fix $n$ a nonnegative integer. Denote by $\A^n_K$ the $n$-dimensional affine space over the field $K$, i.e., the set of elements of the vector space $K^{n}$.  
Let $R$ denote the polynomial ring $k[X_1,\ldots,X_n]$ in $n$ variables over $k$. For any subset $A$ of $R$, define the \emph{zero set of $A$ in $\A^n_K$} as
$$
\ZZ_K(A)=\{(a_1,\ldots,a_n)\in \A^n_K: f(a_1,\ldots,a_n)=0 \text{ for all }f\in A\}.
$$ 
Let $I$ be an ideal of $R$ and let $A$ be a set of polynomials which generate $I$. 
Then it is clear that $\ZZ_K(I)=\ZZ_K(A)$. A subset $Z$ of $\A^n_K$ is called an \emph{affine algebraic variety defined over $k$} if there exists a subset $A$ of $R$ such that $Z=\ZZ_K(A)$. We are following the terminology of Cox, Little and O'Shea~\cite{CLO} and what we have called an affine algebraic variety is sometimes referred to as an affine algebraic set. For an affine algebraic variety $Z\subseteq \A^n_K$, the \emph{vanishing ideal of $Z$ in $R$} is defined as 
$$
\I(Z):=\{f\in R: f({\bf a})=0 \text{ for all }{\bf a}\in Z\}.
$$
It is easy to see that $\ZZ_K(\I(Z))=Z$ when $Z$ is an affine algebraic variety. 

\subsection{Hilbert $K$-Nullstellensatz}

Let $y_0,y_1,\ldots$ be a countably infinite set of elements that are algebraically independent over $k$. 
\begin{defn}
For a nonnegative integer $m$, define the set 
$$
P_K(m):=\{p\in k[y_0,\ldots,y_m]: p \text{ is homogeneous and }\ZZ_K(p)\subseteq \ZZ_K(y_0)\}.
$$
\end{defn}

\begin{defn}
Let $I$ be an ideal of $R$. The $K$-radical of $I$ is defined as 
\begin{eqnarray*}
& \sqrt[K]{I}:=& \{f\in R: \text{ there exist an integer }m\ge 0 ,\ p \in P_K(m) \text{ and }f_1,\ldots, f_{m}\in R\\
 & & \hspace{1.5cm} \text{such that }p(f,f_1,\ldots, f_{m})\in I\}.
\end{eqnarray*}
\end{defn}

The next two propositions specify certain properties of the $K$-radical of an ideal $I$ of $R$, the proofs of which can be found in Laksov~\cite[Section~2]{L}. Additionally, for further insight into the $K$-radical of $I$, we include the proof of the second proposition here.

\begin{prop}\label{prop:1}
For an ideal $I$ of $R$, the $K$-radical of $I$ is a radical ideal of $R$, i.e,  if $H:=\sqrt[K]{I}$, then $\sqrt{H}=H$. Moreover,
$$
\sqrt[K]{H}=H.
$$

\end{prop}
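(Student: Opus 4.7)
The plan is to verify three statements in sequence: (a) $H := \sqrt[K]{I}$ is an ideal of $R$; (b) $\sqrt[K]{H} = H$; (c) $H$ is radical. Item (c) will follow at once from (b), since $y_0^n \in P_K(0)$ witnesses $f \in \sqrt[K]{H}$ whenever $f^n \in H$; the substantive work is thus in (a) and (b).

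For (a), $0 \in H$ and $I \subseteq H$ are clear from the witness $p = y_0 \in P_K(0)$. Closure of $H$ under multiplication by $g \in R$ follows from homogeneity: if $p \in P_K(m)$ of degree $d$ satisfies $p(f, f_1, \ldots, f_m) \in I$, then $p(gf, gf_1, \ldots, gf_m) = g^d\, p(f, f_1, \ldots, f_m) \in I$. The delicate step is closure under addition. Given witnesses $(p_1, \mathbf{g})$ for $f_1$ and $(p_2, \mathbf{h})$ for $f_2$, one must construct $q \in P_K$ and a tuple beginning with $f_1 + f_2$ whose $q$-image lies in $I$. The naive candidate
$$
q(y_0, y_1, \mathbf{z}, \mathbf{w}) := p_1(y_0 - y_1, \mathbf{z})^{d_2}\, p_2(y_1, \mathbf{w})^{d_1},
$$
homogeneous of degree $2d_1 d_2$, evaluated at $(f_1 + f_2, f_2, \mathbf{g}, \mathbf{h})$ does lie in $I$; but this $q$ is not in $P_K$, since the vanishing of the first factor forces only $y_0 = y_1$ and of the second only $y_1 = 0$ rather than $y_0 = 0$. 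Ensuring that every $K$-rational zero of the combining polynomial is forced onto $\{y_0 = 0\}$ is the principal technical obstacle, and I would appeal to Laksov's more intricate construction in \cite[\S 2]{L} for the fix.

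For (b), the inclusion $H \subseteq \sqrt[K]{H}$ is trivial via $p = y_0$. Conversely, if $f \in \sqrt[K]{H}$, there exist $p' \in P_K(m')$ of degree $d'$ and $h_i$'s with $p'(f, h_1, \ldots, h_{m'}) \in H$, and by the definition of $H$ there are further $p \in P_K(m)$ of degree $d$ and $g_j$'s with $p\bigl(p'(f, \mathbf{h}), g_1, \ldots, g_m\bigr) \in I$. Introducing an auxiliary variable $y_N$ (to enforce homogeneity under substitution), define
$$
Q := p\bigl(p'(y_0, y_1, \ldots, y_{m'}),\ y_{m'+1} y_N^{d'-1},\ \ldots,\ y_{m'+m} y_N^{d'-1}\bigr).
$$
A direct degree count shows $Q$ is homogeneous of degree $dd'$: each monomial $c\, w_0^{\alpha_0}\cdots w_m^{\alpha_m}$ of $p$ contributes $d'\alpha_0 + (1 + (d'{-}1))(d - \alpha_0) = dd'$ to the total degree. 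Moreover, if $Q(\mathbf{a}) = 0$ at a $K$-rational point, the $P_K$-property of $p$ forces $p'(a_0, \ldots, a_{m'}) = 0$, and then the $P_K$-property of $p'$ forces $a_0 = 0$; hence $Q \in P_K(m + m' + 1)$. Evaluating at $(f, h_1, \ldots, h_{m'}, g_1, \ldots, g_m, 1)$ recovers $p\bigl(p'(f, \mathbf{h}), g_1, \ldots, g_m\bigr) \in I$, placing $f \in H$. The real bottleneck is thus addition closure in (a); the rest of the proof reduces to clean substitutions using this auxiliary-variable homogenization device.
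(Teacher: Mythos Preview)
The paper gives no proof of this proposition; it simply defers to Laksov~\cite[Section~2]{L}. Your write-up therefore already goes further than the paper, and your arguments for (b) and (c) are correct: the auxiliary-variable homogenization $Q$ is precisely the right device, the degree count and the $P_K$-check are sound, and (c) does follow immediately from (b) via $y_0^n\in P_K(0)$.

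The one gap you flag, closure of $H$ under addition, is real but can be closed with tools already in the paper. If $K=\bar k$ then $\sqrt[K]{I}=\sqrt{I}$ by Remark~\ref{Rmk:NullAlgClosed} and there is nothing to prove. If $K\neq\bar k$, Proposition~\ref{prop:trivialzero} furnishes a homogeneous form $\pi\in k[u,v]$ of positive degree with $\ZZ_K(\pi)=\{\mathbf{0}\}$. After replacing each $p_i$ by a suitable power so that $\deg p_1=\deg p_2=D$ (powers stay in $P_K$ and still land in $I$ upon evaluation), set
\[
q(y_0,y_1,\mathbf{z},\mathbf{w}) \;:=\; \pi\bigl(p_1(y_1,\mathbf{z}),\ p_2(y_0-y_1,\mathbf{w})\bigr).
\]
Both inputs to $\pi$ are degree-$D$ forms in $(y_0,y_1,\mathbf{z},\mathbf{w})$, so $q$ is homogeneous. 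A $K$-rational zero of $q$ forces, by the choice of $\pi$, \emph{both} $p_1(a_1,\mathbf{c})=0$ and $p_2(a_0-a_1,\mathbf{d})=0$; the first gives $a_1=0$ and then the second gives $a_0=0$, so $q\in P_K$. Finally $q(f_1+f_2,f_1,\mathbf{g},\mathbf{h})=\pi\bigl(p_1(f_1,\mathbf{g}),p_2(f_2,\mathbf{h})\bigr)\in I$ because $\deg\pi\ge 1$. This is essentially the construction you would find in Laksov; your product candidate failed only because a product detects the vanishing of \emph{one} factor, whereas a form with only the trivial zero detects the simultaneous vanishing of \emph{all} its arguments.
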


\begin{prop}\label{prop:2}
Let $I$ be an ideal of $R$. Write $H:=\sqrt[K]{I}$. Then $\sqrt{I}\subseteq H$ and 
$$
\ZZ_K(I)=\ZZ_K(H).
$$
\end{prop}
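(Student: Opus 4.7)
The plan is to unpack the definition of $\sqrt[K]{I}$ directly: the whole proposition should fall out of two very small, well-chosen elements of $P_K(m)$, namely $y_0^m \in P_K(0)$ and $y_0 \in P_K(0)$.

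First I would establish $\sqrt{I}\subseteq H$. Given $f\in\sqrt{I}$, pick the least $m\ge 1$ with $f^m\in I$. Take $p=y_0^m\in k[y_0]$; this $p$ is homogeneous and $\ZZ_K(p)=\ZZ_K(y_0)$, so $p\in P_K(0)$. Since $p(f)=f^m\in I$, the definition of the $K$-radical immediately gives $f\in H$. As a byproduct (taking $p=y_0$), every $f\in I$ already lies in $H$, so $I\subseteq H$ and consequently $\ZZ_K(H)\subseteq \ZZ_K(I)$.

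For the reverse inclusion $\ZZ_K(I)\subseteq \ZZ_K(H)$, let ${\bf a}\in\ZZ_K(I)$ and $f\in H$. By definition of $H$, there exist $m\ge 0$, a polynomial $p\in P_K(m)$ and $f_1,\dots,f_m\in R$ with $p(f,f_1,\dots,f_m)\in I$. Evaluating at ${\bf a}$ yields
$$
p\bigl(f({\bf a}),f_1({\bf a}),\dots,f_m({\bf a})\bigr)=0,
$$
so the tuple $\bigl(f({\bf a}),f_1({\bf a}),\dots,f_m({\bf a})\bigr)\in K^{m+1}$ lies in $\ZZ_K(p)$. The defining condition $\ZZ_K(p)\subseteq \ZZ_K(y_0)$ forces the first coordinate to vanish, i.e.\ $f({\bf a})=0$. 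This shows ${\bf a}\in\ZZ_K(H)$ and completes the proof.

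There is essentially no obstacle here; the argument is a mechanical check against the definition of $P_K(m)$ and $\sqrt[K]{I}$. The only point requiring minimal care is to name valid elements of $P_K(m)$ (namely $y_0^m$ and $y_0$) so that the definition of the $K$-radical can be invoked, and to interpret the condition $\ZZ_K(p)\subseteq \ZZ_K(y_0)$ as ``the first coordinate of any $K$-zero of $p$ is zero''.
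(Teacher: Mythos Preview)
Your proof is correct and essentially matches the paper's argument; in particular, the proof of $\ZZ_K(I)=\ZZ_K(H)$ is identical. The one small difference is in how $\sqrt{I}\subseteq H$ is obtained: the paper first shows $I\subseteq H$ via $p=y_0\in P_K(0)$ and then invokes Proposition~\ref{prop:1} (that $H$ is a radical ideal) to conclude $\sqrt{I}\subseteq H$, whereas you bypass Proposition~\ref{prop:1} entirely by using $p=y_0^m\in P_K(0)$ directly. Your route is slightly more self-contained for this proposition, while the paper's version illustrates how Proposition~\ref{prop:1} feeds into the theory.
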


\begin{proof}
Note that $I\subseteq H$. Indeed, if $f\in I$, then the polynomial $p(y_0):=y_0$ in $P_K(0)$ is such that $p(f)=f\in I$, which implies that $f\in H$. Thus by Proposition \ref{prop:1}, $\sqrt{I}\subseteq H$. For the second part, it is clear that $\ZZ_K(H)\subseteq \ZZ_K(I)$ because $I\subseteq H$. 
To prove the other containment, let ${\bf a}\in \ZZ_K(I)$. For $f\in H$, there exist a nonnegative integer $m$, a polynomial $p\in P_K(m)$ and the polynomials $f_1,\ldots,f_m\in R$ such that $p(f,f_1,\ldots,f_m)\in I$. Thus, $p(f({\bf a}),f_1({\bf a}),\ldots,f_m({\bf a}))=0$. Since $p\in P_K(m)$, we obtain $f({\bf a})=0$, as required. 
\end{proof}

Now we are ready to state a Nullstellensatz for an arbitrary pair of fields $k\subseteq K$, known by Adkins, Gianni and Tognoli~\cite{AGT}, and Laksov~\cite{L}. 
Following Laksov~\cite{L}, we call it the \emph{Hilbert $K$-Nullstellensatz}. 

\begin{theorem}[Hilbert $K$-Nullstellensatz]\label{thm:null}
Let $I$ be an ideal of $R$. Then
$$
\I(\ZZ_K(I))=\sqrt[K]{I}.
$$
\end{theorem}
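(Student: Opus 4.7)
The plan is to split into two containments. The inclusion $\sqrt[K]{I}\subseteq \I(\ZZ_K(I))$ is essentially Proposition~\ref{prop:2}, which already gives $\ZZ_K(\sqrt[K]{I})=\ZZ_K(I)$; in fact its proof shows exactly that every element of $\sqrt[K]{I}$ vanishes on $\ZZ_K(I)$.

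The converse $\I(\ZZ_K(I))\subseteq\sqrt[K]{I}$ is the substantive direction, and I would prove it by a Rabinowitsch reduction to the weak form of the Hilbert $K$-Nullstellensatz, namely that $\ZZ_K(J)=\emptyset$ forces $1\in\sqrt[K]{J}$ for any ideal $J$ of a polynomial ring over $k$. This weak form is the main obstacle and the one genuinely new input beyond the classical algebraically closed case; I would take it as known, it being proved in Laksov~\cite{L} via an analysis of the maximal ideals of the coordinate ring $R/J$ and Zariski's lemma. Granting it, fix $f\in\I(\ZZ_K(I))$, introduce a fresh indeterminate $Y$, and form $J:=IR[Y]+\langle 1-Yf\rangle\subseteq R[Y]$. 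Any $(a_1,\ldots,a_n,b)\in\ZZ_K(J)$ would force $(a_1,\ldots,a_n)\in\ZZ_K(I)$, hence $f(a_1,\ldots,a_n)=0$ and so $1-bf(a_1,\ldots,a_n)=1\ne 0$, a contradiction; so $\ZZ_K(J)=\emptyset$, and the weak form applied to $J$ furnishes $p\in P_K(m)$ and $g_1,\ldots,g_m\in R[Y]$ satisfying
\[
p(1,g_1,\ldots,g_m)\;=\;q+(1-Yf)h, \qquad q\in IR[Y],\ h\in R[Y].
\]

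I would then clear denominators in the classical Rabinowitsch fashion. Substituting $Y=1/f$ in the fraction field $k(X_1,\ldots,X_n)$ annihilates the second summand; choosing $N$ larger than the $Y$-degrees of $q,g_1,\ldots,g_m$ makes $h_i:=f^N g_i(X,1/f)\in R$ and $q_0:=f^N q(X,1/f)\in I$. Exploiting the homogeneity of $p$ (say of degree $d$) and multiplying the substituted identity by $f^{Nd}$ yields
\[
p(f^N,h_1,\ldots,h_m)\;=\;f^{N(d-1)}q_0\;\in\;I,
\]
which is precisely a $K$-radical certificate for $f^N$, so $f^N\in\sqrt[K]{I}$. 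Since $\sqrt[K]{I}$ is a radical ideal by Proposition~\ref{prop:1}, this immediately upgrades to $f\in\sqrt[K]{I}$, as required. The one subtle point worth flagging is that the Rabinowitsch trick lands on $f^N$ rather than $f$ in the distinguished slot of $p$, and massaging $p$ to accommodate $f$ itself while keeping it in $P_K(m)$ would be awkward; invoking the radicality in Proposition~\ref{prop:1} sidesteps this entirely.
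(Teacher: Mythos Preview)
Your argument is correct; the two containments are handled soundly, and the Rabinowitsch substitution $Y\mapsto 1/f$ together with the homogeneity of $p$ really does yield $p(f^N,h_1,\ldots,h_m)\in I$, whence $f^N\in\sqrt[K]{I}$ and, by Proposition~\ref{prop:1}, $f\in\sqrt[K]{I}$. (Two harmless edge cases you might flag explicitly: $f=0$ is trivial, and if the witnessing $p$ has degree $0$ then $1\in J$ and one may replace $p$ by $y_0$.)

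The paper's proof also reduces to the Weak Hilbert $K$-Nullstellensatz via Rabinowitsch, but it processes the resulting certificate differently. Instead of substituting $Y=1/f$ and clearing denominators, the paper sets $g:=p(1,f_1,\ldots,f_m)\in J$, observes that $g$ has no zeros in $\A^{n+1}_K$, homogenizes $g$ with a new variable $X_0$ to obtain $g^h$, and notes that this homogenization lies in $P_K(n+1)$. It then checks that $g^h(f,fX_1,\ldots,fX_n,1)\in I$, which is a direct $K$-radical certificate for $f$ itself. The upshot is that the paper lands on $f$ rather than $f^N$, so it never needs Proposition~\ref{prop:1}; your route stays closer to the classical Rabinowitsch computation but pays for the exponent $N$ by invoking the radicality of $\sqrt[K]{I}$. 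Both are clean; the paper's version is slightly more self-contained, while yours is the more familiar denominator-clearing maneuver.
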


\begin{remark}\label{Rmk:NullAlgClosed}
{\rm The Hilbert Nullstellensatz follows from the Hilbert $K$-Nullstellensatz because if $K=\bar{k}$, then $\sqrt[K]{I}=\sqrt{I}$. Indeed, for any nonnegative integer $m$ and for any $p\in P_K(m)$, the polynomial  $p(a,y_1,\ldots,y_m)$ has no zeros in $\A^{m}_K$ for $a\in K\setminus \{0\}$. Since $K$ is algebraically closed, we derive that $p(a,y_1,\ldots,y_m)$ is a nonzero constant polynomial. But $p(y_0,y_1,\ldots,y_m)$ is homogeneous, so $p=ay_0^s$ for some integer $s\ge 0$ and $a\in k$. Therefore, $P_K(m)=\{a y^s: a\in k \text{ and } s\ge 0\}$ for any nonnegative integer $m$. }
\end{remark}

In view of the above remark, we could state the Hilbert Nullstellensatz as a corollary of the Hilbert $K$-Nullstellensatz.  

\begin{cor}[Hilbert Nullstellensatz]
Suppose $K=\bar{k}$ and let $I$ be an ideal of $R$. Then 
$$
\I(\ZZ_K(I))=\sqrt{I}.
$$
\end{cor}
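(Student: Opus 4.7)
The plan is to derive the corollary immediately from the Hilbert $K$-Nullstellensatz (Theorem~\ref{thm:null}) together with Remark~\ref{Rmk:NullAlgClosed}. By Theorem~\ref{thm:null}, $\I(\ZZ_K(I)) = \sqrt[K]{I}$, so the whole task reduces to verifying that $\sqrt[K]{I} = \sqrt{I}$ when $K = \bar{k}$.

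Proposition~\ref{prop:2} already supplies the containment $\sqrt{I} \subseteq \sqrt[K]{I}$, so only the reverse inclusion needs attention. For this, I would invoke the description of $P_K(m)$ given in Remark~\ref{Rmk:NullAlgClosed}: when $K = \bar{k}$, every element of $P_K(m)$ is of the form $a y_0^s$ for some $a \in k$ and some integer $s \ge 0$. Thus, given $f \in \sqrt[K]{I}$, the definition of the $K$-radical furnishes $m \ge 0$, polynomials $f_1, \ldots, f_m \in R$, a scalar $a \in k$, and an integer $s \ge 0$ such that $a f^s = p(f, f_1, \ldots, f_m) \in I$. The scalar $a$ must be nonzero, since the zero polynomial is excluded from $P_K(m)$ (its zero set is all of $\A^{m+1}_K$, not a subset of $\ZZ_K(y_0)$). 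Multiplying by $a^{-1}$ yields $f^s \in I$, so $f \in \sqrt{I}$, as required.

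The only nontrivial point is the one already carried out in the remark, namely the classification of $P_K(m)$ when $K = \bar{k}$. This rests on the fact that a polynomial in $m$ variables over an algebraically closed field with no zeros must be a nonzero constant: specializing $y_0$ to any nonzero $a \in K$ in a $p \in P_K(m)$ gives a polynomial $p(a, y_1, \ldots, y_m)$ with no zeros in $\A^m_K$, which is therefore a nonzero constant, and combined with the homogeneity of $p$ in $y_0, \ldots, y_m$ this forces $p$ to be a scalar multiple of a power of $y_0$. Since this delicate step is already handled in the remark, the corollary follows with no further obstacle.
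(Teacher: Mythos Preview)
Your proposal is correct and follows exactly the route the paper takes: the corollary is deduced from Theorem~\ref{thm:null} together with Remark~\ref{Rmk:NullAlgClosed}, which identifies $\sqrt[K]{I}$ with $\sqrt{I}$ when $K=\bar{k}$ via the classification of $P_K(m)$. You are in fact slightly more careful than the paper in noting that the scalar $a$ must be nonzero because the zero polynomial does not belong to $P_K(m)$.
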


We outline the proof of the Hilbert $K$-Nullstellensatz following Laksov's approach~\cite{L}.  
As mentioned in the introduction, the proof is by Rabinowitsch trick~\cite{R} assuming the weak form of the Hilbert $K$-Nullstellensatz. The statement of the weak form of the Hilbert $K$-Nullstellensatz can be obtained by substituting $\ZZ_K(I)=\emptyset$ in the Hilbert $K$-Nullstellensatz. As in Laksov~\cite{L}, we refer to it as the \emph{Weak Hilbert $K$-Nullstellensatz}, which is as follows.

\begin{theorem}[Weak Hilbert $K$-Nullstellensatz]\label{thm:weakNull}
Let $I$ be an ideal of $R$. If $\ZZ_K(I)$ is empty, then $1\in \sqrt[K]{I}$. 
\end{theorem}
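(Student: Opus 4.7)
The plan is to prove the contrapositive: assuming $1 \notin \sqrt[K]{I}$, produce a point of $\ZZ_K(I)$. By Propositions~\ref{prop:1} and~\ref{prop:2}, I can first replace $I$ by $\sqrt[K]{I}$, so that $I$ is proper, $K$-radical, and in particular radical; the $K$-zero set is unchanged.

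Next I would pass to the algebraic closure. A basis-expansion argument shows that $I\bar{k}[X_1,\ldots,X_n] \cap R = I$, so the extended ideal is still proper, and Hilbert's classical Nullstellensatz supplies a point $\beta = (\beta_1,\ldots,\beta_n) \in \ZZ_{\bar{k}}(I)$. Let $F = k(\beta_1,\ldots,\beta_n) \subseteq \bar{k}$, a finite extension of $k$. If some $\beta$ admits a $k$-embedding $\sigma: F \hookrightarrow K$, then $\sigma(\beta) \in K^n$ satisfies $f(\sigma(\beta)) = \sigma(f(\beta)) = 0$ for every $f \in I$, producing the desired point in $\ZZ_K(I)$.

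The crux is to exhibit such a $\beta$. Assume, for contradiction, that no $\beta \in \ZZ_{\bar{k}}(I)$ satisfies $k(\beta) \hookrightarrow K$. In the zero-dimensional case, where $\ZZ_{\bar{k}}(I)$ is finite, I would choose $g \in R$ (for instance, a sufficiently general $k$-linear combination of $X_1,\ldots,X_n$) so that $g(\beta) \notin K$ for every $\beta \in \ZZ_{\bar{k}}(I)$; this is possible by a genericity argument, avoiding finitely many proper $K$-affine conditions on the coefficients. The polynomial $\mu(y) := \prod_{\beta \in \ZZ_{\bar{k}}(I)}(y - g(\beta))$ is $\mathrm{Gal}(\bar{k}/k)$-invariant, hence lies in $k[y]$, and has no root in $K$ by construction. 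Since $\mu(g(X))$ vanishes on $\ZZ_{\bar{k}}(I)$, the classical Nullstellensatz gives $\mu(g(X))^N \in I\bar{k}[X_1,\ldots,X_n] \cap R = I$ for some $N \geq 1$. The homogenization $p(y_0,y_1) = y_0^{N\deg\mu}\mu^N(y_1/y_0)$ lies in $P_K(1)$, and $p(1, g(X)) = \mu(g(X))^N \in I$ forces $1 \in \sqrt[K]{I} = I$, contradicting properness.

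The general, positive-dimensional case must then be reduced to the zero-dimensional one, for instance by Noether-normalizing $R/I$ with algebraically independent $y_1,\ldots,y_r$ over $k$ and specializing them to zero. This reduction is the main obstacle: the preceding argument applied to the specialized ideal $I' := I + (\tilde{y}_1,\ldots,\tilde{y}_r)$ yields $1 \in \sqrt[K]{I'}$, but since $\sqrt[K]{\cdot}$ does not in general commute with adjoining generators, one must carefully absorb the auxiliary generators $\tilde{y}_j$ into the extra variable slots of some larger $p \in P_K(m+r)$ in order to conclude $1 \in \sqrt[K]{I}$. Making this bookkeeping precise---ensuring a single witness in $P_K$ encodes both the polynomial obstruction on the special fiber and the specializing equations themselves---is where the essential technical work of the proof lies.
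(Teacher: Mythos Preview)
Your proposal is explicitly incomplete and follows a far more intricate route than the paper's. The paper's argument (for $K\neq\bar k$) is direct and short: write $I=\langle h_1,\ldots,h_r\rangle$; invoke Proposition~\ref{prop:trivialzero} to obtain a homogeneous $p\in k[y_1,\ldots,y_r]$ with only the trivial $K$-zero; set $g:=p(h_1,\ldots,h_r)\in I$, which then has no zero in $\A^n_K$ because the $h_i$ have no common $K$-zero; and observe that the homogenization $g^h$ lies in $P_K(n)$ with $g^h(1,X_1,\ldots,X_n)=g\in I$, whence $1\in\sqrt[K]{I}$. No passage to $\bar k$, no dimension reduction, no genericity, and no Galois descent are needed; you never use Proposition~\ref{prop:trivialzero}, which is the one substantive input.

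Beyond the positive-dimensional reduction you yourself flag as unfinished, the zero-dimensional sketch already has gaps. The genericity step producing $g$ with $g(\beta)\notin K$ for every $\beta$ is not available when $k$ is finite: for each $\beta$ the condition $g(\beta)\in K$ cuts out a proper $k$-subspace of the coefficient space, but a finite-dimensional space over a finite field can certainly be a finite union of proper subspaces, and the theorem places no restriction on $k$. The claim $\mu\in k[y]$ via $\mathrm{Gal}(\bar k/k)$-invariance is likewise unjustified in positive characteristic, since the fixed field of that action on $\bar k$ is the perfect closure of $k$, not $k$ itself. Finally, the embedding framing $k(\beta)\hookrightarrow K$ is a detour: because $\ZZ_{\bar k}(I)$ is stable under $\mathrm{Aut}(\bar k/k)$, the condition ``some $\beta$ admits a $k$-embedding of $k(\beta)$ into $K$'' is equivalent to ``some $\beta$ lies in $K^n$'', i.e.\ to $\ZZ_K(I)\neq\emptyset$; so in your contradiction branch you are simply re-assuming the original hypothesis $\ZZ_K(I)=\emptyset$, and the embedding language contributes nothing.
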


By Remark \ref{Rmk:NullAlgClosed}, $\sqrt[K]{I}=\sqrt{I}$ when $K=\bar{k}$. Thus, in this case, the Weak Hilbert $K$-Nullstellensatz follows from the weak form of the Hilbert Nullstellensatz and there are various proofs available in the literature, for instance, see Atiyah and MacDonald~\cite[Corollary 7.10]{AM}. 
Now suppose $K$ is not algebraically closed. Then the Weak Hilbert $K$-Nullstellensatz uses the following standard result, a proof of which can be found in Laksov~\cite[Proposition 5]{L}. 

\begin{prop}\label{prop:trivialzero}
Suppose $K$ is not algebraically closed. Then for each positive integer $e$, there exists a homogeneous polynomial $p\in k[y_1,\ldots,y_e]$ such that $\ZZ_K(p)=\{\bf{0}\}$ where $\bf{0}$ represents the zero vector in $\A^e_K$.
\end{prop}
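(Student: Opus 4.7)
The plan is to reduce to the univariate question of whether $k[x]$ contains a polynomial of positive degree with no root in $K$, and then to build up the general case $e \ge 2$ by an iterative composition.

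For $e = 1$, take $p(y_1) = y_1$. For $e = 2$, if $q(x) \in k[x]$ of degree $d \ge 1$ has no root in $K$, then its homogenization $p_2(y_1, y_2) := y_1^d\, q(y_2/y_1)$ is a homogeneous polynomial of degree $d$ with $\ZZ_K(p_2) = \{(0,0)\}$: any zero $(b_1, b_2) \in K^2$ with $b_1 \neq 0$ would give $q(b_2/b_1) = 0$ with $b_2/b_1 \in K$, contradicting the choice of $q$, while $b_1 = 0$ forces $b_2 = 0$ via the leading coefficient of $q$. For $e \geq 3$, I would iterate: given such a $p_2$ of degree $d_2$ and an inductively constructed $p_{e-1} \in k[y_1, \ldots, y_{e-1}]$ of degree $d_{e-1}$ with only the trivial zero in $K^{e-1}$, set
$$
p_e(y_1, \ldots, y_e) := p_2\bigl(y_1^{d_{e-1}},\, p_{e-1}(y_2, \ldots, y_e)\bigr),
$$
which is homogeneous of degree $d_2 \cdot d_{e-1}$. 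A zero in $K^e$ would force, by the property of $p_2$, that $y_1 = 0$ and $p_{e-1}(y_2, \ldots, y_e) = 0$, and the inductive hypothesis then completes the argument.

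The heart of the proof is thus producing $q \in k[x]$ of positive degree with no root in $K$, equivalently producing $\alpha \in \bar{k}$ whose entire $k$-Galois orbit (the roots of $m_\alpha(x) \in k[x]$ in $\bar{k}$) lies outside $K$; then $q := m_\alpha$ works. I would argue this by contradiction: if every $\alpha \in \bar{k}$ had a $k$-conjugate in $K$, then $\bar{k} = \bigcup_\sigma \sigma(K)$ as a set, with $\sigma$ ranging over $\operatorname{Aut}(\bar{k}/k)$. Restricting this covering to any finite Galois extension $L/k$ inside $\bar{k}$ yields $L = \bigcup_\sigma (L \cap \sigma(K))$, a finite union of $k$-subfields of the finite-dimensional $k$-vector space $L$. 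When $k$ is infinite, a $k$-vector space cannot be covered by finitely many proper subspaces, so some $L \cap \sigma(K) = L$, and Galois invariance of $L$ then gives $L \subseteq K$; when $k$ is finite, every subfield of $\bar{k}$ is setwise stable under $\operatorname{Aut}(\bar{k}/k)$, so $\bigcup_\sigma \sigma(K) = K$ and the covering condition itself forces $K = \bar{k}$. Hence $K$ contains the separable closure $k^{\mathrm{sep}}$; applying the covering condition to purely inseparable elements of $\bar{k}$, whose Galois orbits are singletons, forces $K$ also to contain the perfect closure $k^{1/p^{\infty}}$, and a final step pushes this to $K = \bar{k}$, contradicting $K \subsetneq \bar{k}$.

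The main obstacle is the last step of the Galois-theoretic argument: verifying rigorously that the covering condition forces $K = \bar{k}$ in positive characteristic with $k$ imperfect, where one must carefully combine the action of $\operatorname{Aut}(\bar{k}/k)$ on $p^e$-th roots of separable elements with the field structure of $K$ to rule out intermediate possibilities such as $K = k^{\mathrm{sep}} \cdot k^{1/p^{\infty}}$ being strictly contained in $\bar{k}$.
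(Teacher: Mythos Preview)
The paper does not actually prove this proposition; it merely cites Laksov~\cite[Proposition~5]{L}. So there is nothing in the paper to compare your argument against, and I evaluate the proposal on its own.

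Your reduction to finding a single $q \in k[x]$ of positive degree with no root in $K$, followed by iterated substitution into its homogenization $p_2$, is correct and is the natural route. The step you flag as ``the main obstacle'' is in fact no obstacle at all: once you have established $k^{\mathrm{sep}} \subseteq K$ and $k^{1/p^\infty} \subseteq K$, you are finished, because $k^{\mathrm{sep}} \cdot k^{1/p^\infty} = \bar k$ holds unconditionally. Indeed, the extension $\bar k \big/ \bigl(k^{\mathrm{sep}} \cdot k^{1/p^\infty}\bigr)$ is separable (as a subextension of $\bar k / k^{1/p^\infty}$, whose base is perfect) and at the same time purely inseparable (as a subextension of $\bar k / k^{\mathrm{sep}}$), hence trivial. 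So the possibility you worry about, that $k^{\mathrm{sep}} \cdot k^{1/p^\infty}$ might be strictly contained in $\bar k$, simply does not occur, and the contradiction $K = \bar k$ follows.

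Your covering argument for $k^{\mathrm{sep}} \subseteq K$ is correct but heavier than necessary. A shorter path: for any finite Galois extension $L/k$, write $L = k(\theta)$ by the primitive element theorem; by the standing hypothesis the minimal polynomial of $\theta$ over $k$ has some root $\theta' \in K$, and since $L/k$ is normal every root of that polynomial generates $L$, whence $L = k(\theta') \subseteq K$. This handles infinite and finite $k$ uniformly and avoids the ``vector space is not a finite union of proper subspaces'' lemma.
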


\noindent\emph{Proof of Theorem \ref{thm:weakNull} when $K$ is not algebraically closed.} Since $R$ is a Noetherian ring, there exist finitely many elements $h_1,\ldots,h_r\in R$ such that $I=\left<h_1,\ldots,h_r\right>$. Note that $\ZZ_K(I)=\emptyset$ implies that $I$ is not the zero ideal, and hence $r\ge 1$. By Proposition~\ref{prop:trivialzero}, there exists a homogeneous polynomial $p\in k[y_1,\ldots,y_r]$ such that $p$ has only the trivial zero in $\A^r_K$. Then the homogeneous degree of $p$ is $\ge 1$. Consider the polynomial $g$ in $R$ defined by
$$
g(X_1,\ldots,X_n):=p(h_1,\ldots,h_r).
$$
Since $\ZZ_K(I)$ is empty, $h_1,\ldots,h_r$ have no common zeros and thus $g(X_1,\ldots,X_n)$ has no zeros in $\A^n_K$. Now homogenize $g$ using the new variable $X_0$, call it $g^h(X_0,\ldots,X_n)$. Then 
$$
g^h(1,X_1,\ldots,X_n)=g(X_1,\ldots,X_n)=p(h_1,\ldots,h_r),
$$
which is in $I$ 
because $p$ is homogeneous of degree $\ge 1$ and 
$h_1,\ldots,h_r\in I$. Thus $g^h\in P_K(n)$ and $1\in \sqrt[K]{I}$. 
$\hfill\square$

\vspace{3mm}

\noindent\emph{Proof of Theorem \ref{thm:null}.} By Proposition \ref{prop:2}, it is clear that $\sqrt[K]{I}\subseteq \I(\ZZ_K(I))$. Let $f\in \I(\ZZ_K(I))$. Let $I$ be generated by the polynomials $h_1,\ldots,h_r$ in $R$. We now use the Rabinowitsch trick~\cite{R}. Consider the polynomial ring $R[X_{n+1}]=k[X_1,\ldots,X_{n+1}]$ in $n+1$ variables. Let $J$ be an ideal of $R[X_{n+1}]$ generated by the polynomials
$$
h_1,\ldots,h_r,1-X_{n+1}f.
$$
Then $J$ has no common zeros in $\A^{n+1}_K$. Thus, by the Weak Hilbert $K$-Nullstellensatz, $1\in\sqrt[K]{J}$. This implies that there exist nonnegative integer $m$, polynomial $p\in P_K(m)$ and polynomials $f_1,\ldots,f_m\in R[X_{n+1}]$ such that $p(1,f_1,\ldots,f_m)\in J$. Note that, since $p\in P_K(m)$, the polynomial $p(1,f_1,\ldots,f_m)$ has no zeros in $\A^{n+1}_K$. Let
$$
g(X_1,\ldots,X_{n+1}):=p(1,f_1,\ldots,f_m).
$$
Then there exist $g_1,\ldots,g_{r+1}\in R[X_{n+1}]$ such that
$$
g(X_1,\ldots,X_{n+1})=\sum_{i=1}^{r} g_i h_i+g_{r+1} (1-X_{n+1} f).
$$
Homogenize $g$ using the new variable $X_0$ and call it $g^h(X_0,X_1,\ldots,X_{n+1})$. Then, 
$$
g^h(1,X_1,\ldots,X_{n+1})=g(X_1,\ldots,X_{n+1})\in J.
$$
Since $g$ has no zeros in $\A^{n+1}_K$, we acquire $g^h\in P_K({n+1})$. Now we claim that the polynomial $g^h(f,fX_1,\ldots,fX_{n},1)$ is a linear combination of $h_1,\ldots,h_r$. First note that if the polynomial $\ell^h(X_0,X_1,\ldots,X_n)$ denotes the homogenization of a polynomial $\ell\in R$ using the variable $X_0$, then $\ell^h(f,fX_1,\ldots,fX_{n})=f^{\deg \ell} \ell(X_1,\ldots,X_n)$. Thus, the homogeneous polynomial $h_i^h(f,fX_1,\ldots,fX_{n})$ is an $R$-multiple of $h_i$ for each $1\le i\le r$. Moreover, the homogenization of $(1-X_{n+1} f)$ using the variable $X_0$, say $(1-X_{n+1}f)^h$, is $X_0^{\deg f+1}-X_{n+1}f^h(X_0,X_1,\ldots,X_n)$. Then,
$$
(1-X_{n+1}f)^h(f,fX_1,\ldots,fX_{n},1)=(f^{\deg f+1}-1\cdot f^{\deg f} f)=0.
$$
This proves the claim, i.e., $g^h(f,fX_1,\ldots,fX_{n},1)\in I$. 
Hence, $f\in \sqrt[K]{I}$.  
$\hfill\square$

\vspace{3mm}
The following statement by Gallego, V\'elez and G\'omez-Ram\'rez \cite{GVG} for the weak form of the Nullstellensatz over an arbitrary field could be of interest, which only involves the local existence of zeros of polynomials of the ideal. They call it the \emph{Bezout's form of the Nullstellensatz for arbitrary fields}. We can see this as a corollary of the Weak Hilbert $K$-Nullstellensatz. 

\begin{cor}[Bezout's form of the Nullstellensatz for arbitrary fields]\label{cor:BezoutNull}
Let $I$ be an ideal of $R$. For any field~$K$, $\ZZ_K(I)=\emptyset$ if and only if there exists a polynomial in $I$ which has no zeros in $\A^n_K$.
\end{cor}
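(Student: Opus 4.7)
The forward direction (``if'') is immediate and requires no heavy machinery: if $f \in I$ has no zeros in $\A^n_K$, then any common zero of $I$ would in particular be a zero of $f$, so $\ZZ_K(I) = \emptyset$. So the real content lies in the converse, where we have $\ZZ_K(I) = \emptyset$ and must produce an explicit polynomial in $I$ with no zeros in $\A^n_K$.

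My plan for the converse is to extract the polynomial directly from the Weak Hilbert $K$-Nullstellensatz (Theorem~\ref{thm:weakNull}), which gives $1 \in \sqrt[K]{I}$. Unwinding the definition of the $K$-radical, this means there exist a nonnegative integer $m$, a polynomial $p \in P_K(m)$, and polynomials $f_1, \ldots, f_m \in R$ such that
$$
g := p(1, f_1, \ldots, f_m) \in I.
$$
I claim this $g$ is the desired polynomial. The fact that $g \in I$ is built into the construction, so the only thing to verify is that $g$ has no zeros in $\A^n_K$. For any ${\bf a} \in \A^n_K$, the point $(1, f_1({\bf a}), \ldots, f_m({\bf a})) \in \A^{m+1}_K$ has first coordinate $1 \ne 0$, and therefore lies outside $\ZZ_K(y_0)$. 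Since $p \in P_K(m)$ means $\ZZ_K(p) \subseteq \ZZ_K(y_0)$, this point cannot be a zero of $p$, so $g({\bf a}) = p(1, f_1({\bf a}), \ldots, f_m({\bf a})) \ne 0$. This completes the proof.

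The approach is largely bookkeeping once one has the Weak Hilbert $K$-Nullstellensatz in hand, so there is no serious obstacle; the only subtlety is recognizing the right way to evaluate. A slightly less slick alternative would split into two cases: if $K$ is algebraically closed, the weak Hilbert Nullstellensatz yields $1 \in I$ directly, and the constant polynomial $1$ serves as the required witness; if $K$ is not algebraically closed, one applies Proposition~\ref{prop:trivialzero} to generators $h_1, \ldots, h_r$ of $I$ to construct $p(h_1, \ldots, h_r) \in I$ as in the proof of Theorem~\ref{thm:weakNull}. The unified argument above avoids the case split by exploiting the dehomogenization trick $y_0 \mapsto 1$ intrinsic to the definition of $P_K(m)$, which is in fact exactly the device by which Theorem~\ref{thm:weakNull} was proved.
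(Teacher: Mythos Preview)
Your proof is correct and follows essentially the same approach as the paper: both invoke the Weak Hilbert $K$-Nullstellensatz to obtain $1\in\sqrt[K]{I}$, unwind the definition of the $K$-radical to produce $p(1,f_1,\ldots,f_m)\in I$, and then use the defining property $\ZZ_K(p)\subseteq\ZZ_K(y_0)$ to conclude that this polynomial has no zeros in $\A^n_K$. Your write-up is slightly more explicit in the verification step, and the extra paragraph on the case-split alternative is accurate but not needed.
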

\begin{proof}
One direction is clear. For the other direction, let $\ZZ_K(I)=\emptyset$. Then by Theorem~\ref{thm:weakNull}, $1\in \sqrt[K]{I}$, i.e., there exist nonnegative integer $m$, homogeneous polynomial $p\in P_K(m)$ and $f_1,\ldots,f_{m}\in R$ such that $p(1,f_1,\ldots,f_m)\in I$. Since $\ZZ_K(p)\subseteq \ZZ_K(y_0)$ in $\A^{m+1}_K$, the polynomial $p(1,f_1,\ldots,f_m)$ has no zeros in $\A^n_K$. 
\end{proof}

To keep the article self-contained, we also state the Very Weak Nullstellensatz as mentioned in the introduction. 
This result holds for any infinite field and can be easily proved by induction on $n$. 

\begin{theorem}[Very Weak Nullstellensatz]
Assume that $K$ is an infinite field. If $f\in R$ vanishes at every point of $\A^n_K$, then $f$ is the zero polynomial. 
\end{theorem}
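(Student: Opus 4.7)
The plan is to prove the statement by induction on the number of variables $n$. For the base case $n=1$, the polynomial $f\in k[X_1]$ has finitely many roots in $K$ (at most $\deg f$ of them, as $K$ is a field), so if it vanishes at every point of the infinite set $\A^1_K = K$, it must be identically zero.

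For the inductive step, I would assume the result for $n-1$ and write $f \in k[X_1,\ldots,X_n]$ as a polynomial in $X_n$ with coefficients in $k[X_1,\ldots,X_{n-1}]$, namely
\[
f(X_1,\ldots,X_n)=\sum_{i=0}^{d} g_i(X_1,\ldots,X_{n-1})\,X_n^i,
\]
for some $g_0,\ldots,g_d \in k[X_1,\ldots,X_{n-1}]$. Fix an arbitrary point $(a_1,\ldots,a_{n-1}) \in \A^{n-1}_K$. Then the one-variable polynomial $f(a_1,\ldots,a_{n-1},X_n) \in K[X_n]$ vanishes at every element of $K$ by hypothesis. Since $K$ is infinite, the base case (applied over $K$ rather than $k$, or equivalently the elementary fact that a nonzero polynomial in one variable over a field has finitely many roots) forces all its coefficients to be zero, i.e., $g_i(a_1,\ldots,a_{n-1})=0$ for every $i$. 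As $(a_1,\ldots,a_{n-1})$ was arbitrary, each $g_i$ vanishes on all of $\A^{n-1}_K$, so by the inductive hypothesis each $g_i$ is the zero polynomial in $k[X_1,\ldots,X_{n-1}]$, and consequently $f=0$.

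There is no real obstacle in this argument; the only point requiring a bit of care is the transition in the inductive step, where one must observe that even though $f$ has coefficients in $k$, the evaluated polynomial lives in $K[X_n]$, and the one-variable result is being used over the (possibly larger) field $K$, which is legitimate since $K$ is a field and is infinite by hypothesis. This handles the reduction cleanly and completes the induction.
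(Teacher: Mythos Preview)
Your proof is correct and follows exactly the approach the paper indicates: the paper does not spell out the argument but simply remarks that the result ``can be easily proved by induction on $n$,'' which is precisely what you carry out. Your attention to the fact that the one-variable step is applied over $K$ rather than $k$ is the only subtlety, and you handle it correctly.
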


By the Hilbert $K$-Nullstellensatz, it can be observed that the Very Weak Nullstellensatz implies that the $K$-radical of the ideal $\left<0\right>$ is  $\left<0\right>$ when $K$ is infinite. 

\subsection{Affine $\Fq$-Nullstellensatz}
In this subsection, we assume that $K=\Fq$. As earlier, $k$ denotes a subfield of $K$. In this case, an explicit form of the Hilbert $K$-Nullstellensatz is given by Tarjanian~\cite{T}. 
For a historical account, interested readers can refer to Ghorpade~\cite{G}. For any field $F$, let $\Gamma_q(F)$ denote the ideal of $F[X_1,\ldots,X_n]$ generated by the polynomials $X_1^q-X_1,\ldots,X_n^q-X_n$. Here, we refer to the corresponding results of the Nullstellensatz from the previous subsection as the \emph{Very Weak $\Fq$-Nullstellensatz, Weak $\Fq$-Nullstellensatz}, and the \emph{Affine $\Fq$-Nullstellensatz}. 
We observe that the proofs referred to below for the Affine $\Fq$-Nullstellensatz make strong use of the Very Weak $\Fq$-Nullstellensatz, unlike the proofs of the Hilbert Nullstellensatz and the Hilbert $K$-Nullstellensatz where their weak forms used more prominently. 
We present the results in a general setting here and refer to the proofs that are more relevant to our work on the projective Nullstellensatz in the following section. 

\begin{theorem}[Very Weak $\Fq$-Nullstellensatz]
The vanishing ideal of the affine space $\A^n_K$ in $R$ is 
\begin{equation}\label{eq:FiniteNull1}
\I(\A^n_K)=\Gamma_q(k).
\end{equation}
Moreover, if $F$ is an algebraic extension of $K$, then the vanishing ideal $\I_F(\A^n_K)$ considered as an ideal in $F[X_1,\ldots,X_n]$ satisfies  
\begin{equation}\label{eq:FiniteNull2}
\I_F(\A^n_K)=\Gamma_q(F).
\end{equation}
\end{theorem}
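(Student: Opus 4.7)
The plan is to handle both containments by reduction modulo the defining generators of $\Gamma_q(\cdot)$ and then invoke a Vandermonde-style invertibility argument. The easy direction $\Gamma_q(k)\subseteq \I(\A^n_K)$, and similarly $\Gamma_q(F)\subseteq \I_F(\A^n_K)$, is immediate from Fermat's little theorem, since $a^q=a$ for every $a\in \Fq$, so each $X_i^q-X_i$ vanishes on $\A^n_K$. The substantive content is the reverse inclusion.

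For \eqref{eq:FiniteNull1}, I would start with an arbitrary $f\in \I(\A^n_K)$ and exploit that $X_i^q-X_i$ is monic in $X_i$ to perform successive polynomial division in each variable, producing
$$
f=\sum_{i=1}^n g_i(X_i^q-X_i)+r,\qquad \deg_{X_i}(r)<q \text{ for all } i.
$$
Since the generators of $\Gamma_q(k)$ vanish on $\A^n_K$, the remainder $r$ also vanishes on $\A^n_K$, and it suffices to show $r=0$. Writing $r=\sum_{\alpha} c_\alpha X^\alpha$ with $\alpha\in\{0,\ldots,q-1\}^n$ and $c_\alpha\in k\subseteq \Fq$, the conditions $r({\bf a})=0$ for ${\bf a}\in \Fq^n$ form a homogeneous $q^n\times q^n$ linear system whose coefficient matrix $({\bf a}^\alpha)_{{\bf a},\alpha}$ is the $n$-fold Kronecker product of the univariate $q\times q$ Vandermonde matrix whose rows are indexed by the distinct elements of $\Fq$. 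This matrix is invertible, forcing all $c_\alpha=0$ and hence $r=0$, so $f\in \Gamma_q(k)$.

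The extension statement \eqref{eq:FiniteNull2} will follow from exactly the same argument with $F$ in place of $k$: $X_i^q-X_i$ remains monic in $X_i$ over $F$, the remainder lies in $F[X_1,\ldots,X_n]$, and the relevant Vandermonde matrix still has entries in $\Fq\subseteq F$ with nonzero determinant. The only genuinely delicate point I anticipate is the injectivity of the evaluation map on polynomials of degree $<q$ in each variable, which is really just the dimension count that the $q^n$ such monomials span the $q^n$-dimensional space of functions $\Fq^n\to \Fq$; the rest of the argument is routine division and linear algebra.
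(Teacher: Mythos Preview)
Your argument is correct. The division-plus-Vandermonde approach is a standard and entirely valid way to show that a polynomial of degree $<q$ in each variable that vanishes on $\Fq^n$ must be identically zero, and it works uniformly whether the coefficient field is $k\subseteq\Fq$ or an extension $F\supseteq\Fq$, so both \eqref{eq:FiniteNull1} and \eqref{eq:FiniteNull2} follow as you describe.

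The paper takes a different route. It does not argue directly at all: for \eqref{eq:FiniteNull2} it simply invokes Ghorpade~\cite[Theorem~2.3(i)]{G}, and then obtains \eqref{eq:FiniteNull1} by specializing to $F=K=\Fq$ and contracting to $k[X_1,\ldots,X_n]$, using that the generators $X_i^q-X_i$ already lie in $k[X_1,\ldots,X_n]$ so that $\Gamma_q(K)\cap k[X_1,\ldots,X_n]=\Gamma_q(k)$. Your proof is self-contained and more elementary, requiring no external citation; the paper's proof is terser but treats the core vanishing statement as a black box, and its real point is to exhibit the extension/contraction mechanism linking the $k$-, $K$-, and $F$-versions---a device the paper then reuses verbatim in the proofs of Theorem~\ref{thm:finiteNull} and Theorem~\ref{thm:VeryWeakFiniteProj}.
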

\begin{proof}
The result \eqref{eq:FiniteNull2} follows from Ghorpade~\cite[Theorem 2.3(i)]{G}. Thus for $F=K=\Fq$, we obtain $\I_K(\A^n_K)=\Gamma_q(K)$. Now,  
$$
\I(\A^n_K)=\I_K(\A^n_K)|_{k[X_1,\ldots,X_n]}=\Gamma_q(K)|_{k[X_1,\ldots,X_n]}=\Gamma_q(k),
$$
where the last equality follows from the fact that the generators $X_1^q-X_1,\ldots,X_n^q-X_n$ of $\Gamma_q(K)$ have coefficients in $k$. 
\end{proof}

\begin{theorem}[Affine $\Fq$-Nullstellensatz]\label{thm:finiteNull}
Let $I$ be an ideal of $R$. 
Then
\begin{equation}\label{eq:FiniteNull3}
\I(\ZZ_{K}(I))=I+\Gamma_q(k).
\end{equation}
Moreover, if $F$ is an algebraic extension of $K$ and $J$ is an ideal of $F[X_1,\ldots,X_n]$ generated by some polynomials of $K[X_1,\ldots,X_n]$, then the vanishing ideal $\I_F(\ZZ_K(J))$ considered as an ideal in $F[X_1,\ldots,X_n]$ satisfies  
\begin{equation}\label{eq:FiniteNull4}
\I_{F}(\ZZ_{K}(J))=J+\Gamma_q(F).
\end{equation}
\end{theorem}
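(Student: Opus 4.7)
The plan is to prove the general form \eqref{eq:FiniteNull4} first, then recover \eqref{eq:FiniteNull3} from it by a restriction-of-scalars argument. The containment $J + \Gamma_q(F) \subseteq \I_F(\ZZ_K(J))$ is immediate, since the generators of $J$ vanish on $\ZZ_K(J)$ and each $X_i^q - X_i$ vanishes on $\A^n_K \supseteq \ZZ_K(J)$; the content lies in the reverse inclusion.

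For that inclusion, I would work in the quotient ring $A := F[X_1,\ldots,X_n]/\Gamma_q(F)$. The key structural input is that the evaluation map $A \to F^{K^n}$, $\bar{g} \mapsto (g({\bf a}))_{{\bf a} \in K^n}$, is an $F$-algebra isomorphism: injectivity is exactly the Very Weak $\Fq$-Nullstellensatz \eqref{eq:FiniteNull2}, while surjectivity follows from a dimension count, since both sides have $F$-dimension $q^n$ (the reduced monomials $X_1^{\alpha_1}\cdots X_n^{\alpha_n}$ with $0 \le \alpha_i < q$ form an $F$-basis of $A$). Thus $A$ is isomorphic to the product of fields $F^{K^n}$, in which every ideal is the set of functions vanishing on some subset of $K^n$ (the idempotent cutting out a single point can be manufactured inside any ideal at which that point is not a common zero). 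Consequently, the image $\bar{J}$ of $J$ in $A$ equals the full set of functions $K^n \to F$ vanishing on $\ZZ_K(J)$. Hence any $f \in F[X_1,\ldots,X_n]$ vanishing on $\ZZ_K(J)$ has $\bar{f} \in \bar{J}$, yielding $f \in J + \Gamma_q(F)$.

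To recover \eqref{eq:FiniteNull3}, set $J := I \cdot K[X_1,\ldots,X_n]$, noting $\ZZ_K(J) = \ZZ_K(I)$. Then \eqref{eq:FiniteNull4} with $F = K$ gives $\I_K(\ZZ_K(I)) = J + \Gamma_q(K)$, and since $\I(\ZZ_K(I)) = \I_K(\ZZ_K(I)) \cap R$, it remains to show $(J + \Gamma_q(K)) \cap R = I + \Gamma_q(k)$. The $\supseteq$ inclusion is clear; for $\subseteq$, I would fix a $k$-basis $\{e_\alpha\}$ of $K$ with $e_0 = 1$ and, given $f = j + \gamma$ with $j \in J$ and $\gamma \in \Gamma_q(K)$, decompose each coefficient appearing in $j$ and $\gamma$ along this basis. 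Since the generators of $J$ lie in $I \subseteq R$ and the generators of $\Gamma_q(K)$ lie in $\Gamma_q(k) \subseteq R$, each $e_\alpha$-component of $j$ lies in $I$ and each $e_\alpha$-component of $\gamma$ lies in $\Gamma_q(k)$; reading off the $e_0$-component of $f = j + \gamma$ recovers $f$ as an element of $I + \Gamma_q(k)$.

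The main obstacle is the structural step of identifying $A$ with the product ring $F^{K^n}$, since once this is in hand the characterization of ideals by their zero sets becomes formal and the basis-decomposition reduction is essentially bookkeeping. A conceptually pleasing alternative to the dimension count would be to factor $X_i^q - X_i = \prod_{c \in \Fq}(X_i - c)$ over $F \supseteq \Fq$ and apply the Chinese Remainder Theorem variable by variable, which yields the same isomorphism without directly invoking \eqref{eq:FiniteNull2}.
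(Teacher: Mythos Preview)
Your proposal is correct, and the overall architecture---establish \eqref{eq:FiniteNull4} first, then deduce \eqref{eq:FiniteNull3} by intersecting with $R$ using that all generators of $J+\Gamma_q(K)$ already lie in $k[X_1,\ldots,X_n]$---matches the paper exactly. The restriction step you spell out with a $k$-basis of $K$ is precisely the content of the paper's one-line assertion that $(J+\Gamma_q(K))|_{k[X_1,\ldots,X_n]}=I+\Gamma_q(k)$.

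The genuine difference is in how \eqref{eq:FiniteNull4} is obtained. The paper does not prove it at all: it simply invokes Ghorpade~\cite[Theorem~2.3]{G} as a black box. You instead give a self-contained argument by identifying $F[X_1,\ldots,X_n]/\Gamma_q(F)$ with the product ring $F^{K^n}$ via evaluation (using \eqref{eq:FiniteNull2} for injectivity and a dimension count, or alternatively CRT on the factorization $X_i^q-X_i=\prod_{c\in\Fq}(X_i-c)$), after which the equality of $\bar J$ with the ideal of functions vanishing on $\ZZ_K(J)$ is immediate from the structure of ideals in a finite product of fields. This buys you independence from the external reference and a transparent reason why $J+\Gamma_q(F)$ is automatically radical; it also shows, incidentally, that the hypothesis that $J$ be generated by polynomials over $K$ is not actually needed for \eqref{eq:FiniteNull4}. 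The paper's route is shorter on the page but only because the work is outsourced.
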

\begin{proof}
Note that by Ghorpade~\cite[Theorem 2.3]{G}, the result \eqref{eq:FiniteNull4} is true. To prove \eqref{eq:FiniteNull3}, let $I$ be generated by $h_1,\ldots,h_r$ in $R$. Since $k\subseteq K=\Fq$, consider the ideal $J$ of $K[X_1,\ldots,X_n]$ generated by $h_1,\ldots,h_r$. Thus for $F=K=\Fq$, by \eqref{eq:FiniteNull4}, we know that 
\begin{equation}\label{eq:3}
\I_{K}(\ZZ_{K}(J))=J+\Gamma_q(K).
\end{equation}
Since $\ZZ_{K}(J)=\ZZ_{K}(h_1,\ldots,h_r)=\ZZ_{K}(I)$, we obtain
\begin{equation}\label{eq:4}
\I_{K}(\ZZ_{K}(J))|_{k[X_1,\ldots,X_n]}=\I(\ZZ_{K}(I)).
\end{equation}
Combining \eqref{eq:3} and \eqref{eq:4}, we get
$$
\I(\ZZ_{K}(I))=(J+\Gamma_q(K))|_{k[X_1,\ldots,X_n]}. 
$$
Since $J+\Gamma_q(K)$ as an ideal of $K[X_1,\ldots,X_n]$, is generated by polynomials $h_1,\ldots,h_r,$ $X_1^q-X_1,\ldots,X_n^q-X_n$ which are also in $k[X_1,\ldots,X_n]$, we derive that
$$
(J+\Gamma_q(K))|_{k[X_1,\ldots,X_n]}=I+\Gamma_q(k),
$$
as required.
\end{proof}

A notable consequence of the above theorem is that 
$I+\Gamma_q(k)$ is a radical ideal of $R$. We now state the Weak $\Fq$-Nullstellensatz which can be deduced by the Affine $\Fq$-Nullstellensatz when $\ZZ_K(I)=\emptyset$. 

\begin{cor}[Weak $\Fq$-Nullstellensatz]
Let $I$ is an ideal of $R$. If $\ZZ_K(I)$ is empty, then 
$$
1+f\in I \quad \text{ for some } f\in \Gamma_q(k).
$$
Moreover, if $F$ is an algebraic extension of $K$ and $J$ is an ideal of $F[X_1,\ldots,X_n]$ generated by some polynomials of $K[X_1,\ldots,X_n]$ such that $\ZZ_K(J)$ is empty, then 
\begin{equation*}
1+f\in J \quad \text{ for some } f\in \Gamma_q(F).
\end{equation*}
\end{cor}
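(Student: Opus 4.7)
The plan is to deduce the Weak $\Fq$-Nullstellensatz directly from the Affine $\Fq$-Nullstellensatz (Theorem \ref{thm:finiteNull}), using the observation that the vanishing ideal of the empty set is the entire polynomial ring (since every polynomial vacuously vanishes on $\emptyset$).

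First, assume $\ZZ_K(I) = \emptyset$. Then $\I(\ZZ_K(I)) = R$, since every element of $R$ vanishes on the empty set. By the Affine $\Fq$-Nullstellensatz, $\I(\ZZ_K(I)) = I + \Gamma_q(k)$, so $R = I + \Gamma_q(k)$. In particular, $1 \in I + \Gamma_q(k)$, which means there exist $g \in I$ and $h \in \Gamma_q(k)$ with $1 = g + h$. Setting $f = -h \in \Gamma_q(k)$, we obtain $1 + f = g \in I$, which is exactly the desired statement.

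For the moreover part, the argument is identical but uses the second statement of Theorem~\ref{thm:finiteNull}. Namely, under the hypothesis that $\ZZ_K(J) = \emptyset$, the vanishing ideal $\I_F(\ZZ_K(J))$ equals all of $F[X_1,\ldots,X_n]$, and by \eqref{eq:FiniteNull4} this equals $J + \Gamma_q(F)$. Hence $1 \in J + \Gamma_q(F)$, and the same rearrangement produces $f \in \Gamma_q(F)$ with $1 + f \in J$.

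There is no real obstacle here since Theorem~\ref{thm:finiteNull} has already been established; the corollary is purely a matter of substituting $\ZZ_K(I) = \emptyset$ and extracting the element $1$ from the resulting ideal equality. The only point worth checking carefully is the convention that $\I(\emptyset)$ is the whole ring, which is immediate from the definition of $\I$.
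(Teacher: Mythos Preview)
Your proof is correct and follows exactly the approach the paper indicates: the paper states just before the corollary that it ``can be deduced by the Affine $\Fq$-Nullstellensatz when $\ZZ_K(I)=\emptyset$'' and gives no further argument, so your substitution of $\ZZ_K(I)=\emptyset$ into Theorem~\ref{thm:finiteNull} and extraction of $1$ from $I+\Gamma_q(k)$ is precisely what was intended.
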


\section{Main result: Projective Nullstellensatz}
Denote by $\PP^n_K$ the $n$-dimensional projective space over the field $K$ which is defined as the set $({K^{n+1}\setminus \{\bf{0}\}})/{\sim}$ of equivalence classes of ${K^{n+1}\setminus \{\bf{0}\}}$ under the proportionality relation, i.e., for all $u,v\in K^{n+1}\setminus \{\bf{0}\}$, 
$$
u\sim v \Longleftrightarrow u=\lambda v \text{ for some }\lambda\in K.
$$
We denote the equivalence classes of $(a_0,\ldots,a_{n})$ in $K^{n+1}\setminus \{\bf{0}\}$ by $[a_0:\cdots:a_n]$.  
Let $S$ be the polynomial ring $k[X_0,\ldots,X_n]$ in $n+1$ variables over the field $k$. 
For any subset $B$ of $S$ consisting of homogeneous polynomials, define the \emph{zero set of $B$ in $\PP^n_K$} as
$$
\V_K(B):=\{[a_0:\cdots:a_n]\in \PP^n_K:f(a_0,\ldots,a_n)=0 \text{ for all }f\in B\}.
$$
Let $I$ be a homogeneous ideal of $S$ and let $B$ be a set of homogeneous polynomials which generates $I$. Then the \emph{zero set of $I$ in $\PP^n_K$} is defined as $\V_K(I):=\V_K(B)$. A subset $V$ of $\PP^n_K$ is called a \emph{projective (algebraic) variety defined over $k$} if there exists a subset $B$ of $S$ of homogeneous polynomials such that $V=\V_K(B)$. For a projective variety $V\subseteq \PP^n_K$, the \emph{vanishing ideal of $V$ in $S$} is defined as 
$$
\I(V):=\left<f\in S: f \text{ is homogeneous and } f({\bf a})=0 \text{ for every }{\bf a}\in V\right>.
$$
It is easy to show that $\V_K(\I(V))=V$ if $V$ is a projective variety. 

\subsection{Projective Nullstellensatz for infinite fields}
The projective Nullstellensatz is a projective analogue of the Nullstellensatz for the projective variety $V$ in $\PP^n_K$ and the homogeneous ideal $\I(V)$ in $S$. When $K$ is infinite, we show that the projective Nullstellensatz can be directly obtained using the Nullstellensatz of the affine case. For this, we use a simple observation noted, for instance, in Fulton~\cite[Problem~4.2]{F}.

\begin{prop}\label{affprojsame}
Assume that $K$ is infinite. Let $f\in S$. Write $f:=f_1+\cdots+f_s$ where $f_1,\ldots,f_s$ are the homogeneous components of $f$ of degrees $d_1,\ldots,d_s$ respectively satisfying $d_1>\cdots>d_s\ge 0$. If there exists ${\bf a}=[a_0:\cdots:a_n]\in \PP^n_K$ such that $f(\lambda a_0,\ldots,\lambda a_n)=0$ for every $\lambda\in K\setminus \{0\}$, then $f_i(\lambda a_0,\ldots,\lambda a_n)=0$ for every $1\le i\le s$ and for every $\lambda\in K\setminus \{0\}$, or simply $f_i(a_0,\ldots, a_n)=0$ for every $1\le i\le s$.     
\end{prop}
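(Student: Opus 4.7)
The plan is to treat $\lambda$ as an auxiliary variable and exploit homogeneity to convert the hypothesis into a univariate polynomial identity, then invoke the fact that over an infinite field a nonzero polynomial in one variable has only finitely many roots.

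Concretely, I would first set $c_i := f_i(a_0, \ldots, a_n) \in K$ for $1 \le i \le s$ and consider the one-variable polynomial
$$
g(T) := f(T a_0, \ldots, T a_n) \in K[T].
$$
Since each $f_i$ is homogeneous of degree $d_i$, we have $f_i(T a_0, \ldots, T a_n) = T^{d_i} f_i(a_0, \ldots, a_n) = c_i T^{d_i}$. Summing the homogeneous components yields
$$
g(T) = \sum_{i=1}^{s} c_i \, T^{d_i}.
$$
Because the exponents $d_1 > \cdots > d_s \ge 0$ are pairwise distinct, the $c_i$ appear as the coefficients of genuinely distinct monomials in $T$, so $g$ is identically zero as an element of $K[T]$ if and only if every $c_i = 0$.

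By hypothesis, $g(\lambda) = 0$ for every $\lambda \in K \setminus \{0\}$. Since $K$ is infinite, $K \setminus \{0\}$ is infinite, whereas a nonzero polynomial in $K[T]$ has at most $\deg g \le d_1$ roots in $K$. Therefore $g$ must be the zero polynomial, which forces $c_i = f_i(a_0, \ldots, a_n) = 0$ for every $1 \le i \le s$. Homogeneity then immediately upgrades this to $f_i(\lambda a_0, \ldots, \lambda a_n) = \lambda^{d_i} c_i = 0$ for every $\lambda \in K \setminus \{0\}$, giving both forms of the conclusion.

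There is no real obstacle here: the only nontrivial input is that $K$ is infinite, which is precisely what is needed to conclude that the univariate polynomial $g(T)$ vanishes as a formal polynomial (not merely pointwise), and this is exactly the one-variable Very Weak Nullstellensatz recalled earlier in the paper. The argument would break down over a finite field since then $g(T)$ could vanish on all of $K \setminus \{0\}$ without being the zero polynomial, which is consistent with the hypothesis of the proposition.
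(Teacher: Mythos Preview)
Your proof is correct and follows essentially the same approach as the paper: both evaluate $f$ along the ray $\lambda\mapsto(\lambda a_0,\ldots,\lambda a_n)$, use homogeneity to obtain a univariate polynomial $\sum_i c_i T^{d_i}$ with distinct exponents, and conclude from infiniteness of $K$ that this polynomial is identically zero. The only differences are notational.
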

\begin{proof}
Note that 
$$
f(\lambda a_0,\ldots,\lambda a_n)=\sum_{i=1}^s \lambda^{d_i} f_i(a_0,\ldots,a_n)=0 \quad \text{for }\lambda\in K\setminus \{0\}.
$$
Write $b_i:=f_i(a_0,\ldots,a_n)$ for $1\le i\le s$. Then $b_i\in K$ for all $1\le i\le s$. Define 
$$
g(x):=b_1x^{d_1}+\cdots+b_s x^{d_s}
$$
in the polynomial ring $K[x]$ with variable $x$. Then $g(\lambda)=0$ for all $\lambda\in K\setminus \{0\}$. Since $K$ is infinite, $g$ has infinitely many zeros in the field $K$ which is possible only if $g=0$. Thus, $b_1=\cdots=b_s=0$ because $d_1,\ldots,d_s$ are distinct and nonnegative. This  proves the result.
\end{proof}

\begin{cor}\label{cor:projinfsame}
Let $I$ be a homogeneous ideal of $S$. If $K$ is infinite and $\V_K(I)\ne \emptyset$, then 
$$
\I(\V_K(I))=\I(\ZZ_K(I)).
$$
\end{cor}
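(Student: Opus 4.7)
The plan is to verify both containments of $\I(\V_K(I)) = \I(\ZZ_K(I))$ separately; the nontrivial direction will invoke Proposition~\ref{affprojsame}, and this is the only place where the hypothesis that $K$ is infinite enters.

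For the forward containment $\I(\V_K(I)) \subseteq \I(\ZZ_K(I))$, it suffices to show that every homogeneous generator of $\I(\V_K(I))$ lies in $\I(\ZZ_K(I))$. So I would let $f \in S$ be homogeneous with $f({\bf b}) = 0$ for every ${\bf b} \in \V_K(I)$. Since $\V_K(I) \neq \emptyset$ by hypothesis, $f$ cannot be a nonzero constant, so either $f = 0$ or $\deg f \geq 1$; in particular $f(0, \ldots, 0) = 0$. For a nonzero point $(a_0, \ldots, a_n) \in \ZZ_K(I)$, every homogeneous generator of $I$ vanishes there, which is exactly the condition for $[a_0:\cdots:a_n]$ to belong to $\V_K(I)$; hence $f(a_0, \ldots, a_n) = 0$. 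This yields $f \in \I(\ZZ_K(I))$, and in fact does not use that $K$ is infinite.

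For the reverse containment $\I(\ZZ_K(I)) \subseteq \I(\V_K(I))$, I take $f \in \I(\ZZ_K(I))$ and decompose it as $f = f_1 + \cdots + f_s$, the sum of its homogeneous components of distinct degrees. Fix any $[a_0:\cdots:a_n] \in \V_K(I)$. Because $I$ is homogeneous, every homogeneous generator $g$ of $I$ satisfies $g(\lambda a_0, \ldots, \lambda a_n) = \lambda^{\deg g} g(a_0, \ldots, a_n) = 0$ for every $\lambda \in K$, so $(\lambda a_0, \ldots, \lambda a_n) \in \ZZ_K(I)$. Therefore $f(\lambda a_0, \ldots, \lambda a_n) = 0$ for all $\lambda \in K \setminus \{0\}$, and the infiniteness of $K$ now allows Proposition~\ref{affprojsame} to conclude $f_i(a_0, \ldots, a_n) = 0$ for every $1 \leq i \leq s$. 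Since the projective point was arbitrary in $\V_K(I)$, each homogeneous component $f_i$ vanishes on $\V_K(I)$, so $f_i \in \I(\V_K(I))$ and hence $f \in \I(\V_K(I))$. The only subtle point to watch is the role of the nonemptiness hypothesis in the forward containment, which rules out a nonzero constant being a generator of $\I(\V_K(I))$ (a failure mode exemplified by the irrelevant maximal ideal); with that edge case handled, the argument is a routine unfolding of the cone-over-a-projective-variety correspondence combined with one clean application of Proposition~\ref{affprojsame}.
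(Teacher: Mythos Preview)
Your proof is correct and follows essentially the same approach as the paper's: both arguments hinge on Proposition~\ref{affprojsame} to show that $\I(\ZZ_K(I))$ is homogeneous, which is the crux of the matter. The paper packages this a bit more tersely by observing that $\I(\V_K(I))$ is the largest homogeneous ideal contained in $\I(\ZZ_K(I))$ and then invoking Proposition~\ref{affprojsame} once, whereas you unpack the two containments explicitly; the content is the same.
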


\begin{proof}
Since $\V_K(I)\ne \emptyset$, the homogeneous ideal $I$ can not contain a nonzero constant polynomial.  
This gives ${\bf 0}\in \ZZ_K(I)\subseteq \A^{n+1}_K$ and $\V_K(I)=(\ZZ_K(I)\setminus \{{\bf 0}\})/\sim$. Thus, $\I(\V_K(I))$ is the largest homogeneous ideal contained in $\I(\ZZ_K(I))$. But, by Proposition~\ref{affprojsame}, $\I(\ZZ_K(I))$ is itself a homogeneous ideal. Hence, $\I(\V_K(I))=\I(\ZZ_K(I))$. 
\end{proof}

By the above corollary, the Hilbert $K$-Nullstellensatz can be used to obtain the projective Nullstellensatz when $K$ is infinite and $\V_K(I)\ne \emptyset$. We complete the result by proving the following theorem.

\begin{theorem}[Projective Nullstellensatz for infinite fields]\label{thm:projNullinf}
Assume that $K$ is infinite. Then for any homogeneous ideal $I$ of $S$, 
\begin{enumerate}
\item $\I(\V_K(I))=\sqrt[K]{I}$ if $\V_K(I)$ is nonempty.
\item $\I(\V_K(I))=S$ if $\V_K(I)$ is empty. 
In this case, 
either $I=S$ or $\sqrt[K]{I}=\left<X_0,\ldots,X_n\right>$.
\end{enumerate}
Additionally, in case $K$ is algebraically closed, 
\begin{enumerate}
\item[(3)] $\I(\V_K(I))=\sqrt{I}$ if $\V_K(I)$ is nonempty.
\item[(4)] $\I(\V_K(I))=S$ if $\V_K(I)$ is empty. In this case, either $I=S$ or $\sqrt{I}=\left<X_0,\ldots,X_n\right>$.
\end{enumerate}
\end{theorem}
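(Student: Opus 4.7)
The plan is to bootstrap the projective statement from the affine Hilbert $K$-Nullstellensatz (Theorem \ref{thm:null}) via Corollary \ref{cor:projinfsame}, and then read off the algebraically closed case from Remark \ref{Rmk:NullAlgClosed}. Parts (1) and (2) are the substance; (3) and (4) are essentially restatements once $\sqrt[K]{I}=\sqrt{I}$ is available.

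For part (1), assume $\V_K(I)\neq\emptyset$. Corollary \ref{cor:projinfsame} already identifies $\I(\V_K(I))$ with the affine vanishing ideal $\I(\ZZ_K(I))$, and the Hilbert $K$-Nullstellensatz identifies the latter with $\sqrt[K]{I}$. So this part is a one-line chain of equalities once the corollary is invoked.

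For part (2), observe first that if $\V_K(I)=\emptyset$ then every homogeneous polynomial of $S$ vacuously vanishes on $\V_K(I)$, hence $\I(\V_K(I))=S$ by the very definition of the projective vanishing ideal. For the dichotomy, I would argue: if $I\neq S$ then $I$ is a proper homogeneous ideal and therefore contains no nonzero constant, so every homogeneous generator of $I$ has positive degree and vanishes at the origin of $\A^{n+1}_K$; thus $\mathbf{0}\in \ZZ_K(I)$. On the other hand, $\V_K(I)=\emptyset$ means $\ZZ_K(I)\setminus\{\mathbf{0}\}=\emptyset$, because homogeneity forces this affine locus to be a union of scaling orbits and any nonzero orbit would contribute a projective point. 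Consequently $\ZZ_K(I)=\{\mathbf{0}\}$, and Theorem \ref{thm:null} gives
\[
\sqrt[K]{I}=\I(\ZZ_K(I))=\I(\{\mathbf{0}\})=\langle X_0,\ldots,X_n\rangle,
\]
which is what is wanted.

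For parts (3) and (4), $K$ algebraically closed and $K\subseteq\bar{k}$ forces $K=\bar{k}$, since $\bar{k}$ is the algebraic closure of $k$ inside itself. By Remark \ref{Rmk:NullAlgClosed} this makes $\sqrt[K]{I}=\sqrt{I}$ for every ideal $I$, so (3) and (4) follow from (1) and (2) by substitution. I do not foresee any real obstacle; the only subtlety is the brief scaling-orbit argument needed in part (2) to pass from $\V_K(I)=\emptyset$ to $\ZZ_K(I)\subseteq\{\mathbf{0}\}$, together with the elementary observation that a proper homogeneous ideal always contains the origin in its affine zero set.
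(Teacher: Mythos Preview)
Your proposal is correct and follows essentially the same route as the paper: part (1) via Corollary~\ref{cor:projinfsame} and Theorem~\ref{thm:null}, part (2) by reducing to $\ZZ_K(I)\subseteq\{\mathbf 0\}$ and invoking Theorem~\ref{thm:null}, and parts (3)--(4) via Remark~\ref{Rmk:NullAlgClosed}. The only minor difference is that for the dichotomy in (2) the paper splits into the cases $\ZZ_K(I)=\emptyset$ and $\ZZ_K(I)=\{\mathbf 0\}$ and handles the former with Corollary~\ref{cor:BezoutNull}, whereas you argue the contrapositive directly (a proper homogeneous ideal has $\mathbf 0$ in its affine zero set), which is a slight simplification but not a different idea.
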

\begin{proof}
(1) This follows from Corollary \ref{cor:projinfsame} and Theorem \ref{thm:null}.

(2) Clearly, $1\in \I(\V_K(I))$ when $\V_K(I)$ is empty. This implies that $\I(\V_K(I))=S$. Since $\V_K(I)$ is empty and $I$ is a homogeneous ideal, we have either $\ZZ_K(I)=\emptyset$ or $\ZZ_K(I)=\{{\bf 0}\}$ in $\A^{n+1}_K$. In the first case, by Corollary \ref{cor:BezoutNull}, there exists a polynomial $f$ in $I$ such that $\ZZ_K(f)=\emptyset$. Since $I$ is homogeneous, all the homogeneous components of $f$ are in $I$. Thus, ${\bf 0}\notin \ZZ_K(f)$ implies that there exists a constant component of $f$ and hence $1\in I$. In the second case, i.e., when $\ZZ_K(I)=\{{\bf 0}\}$, we obtain $\sqrt[K]{I}=\I(\{{\bf 0}\})=\left<X_0,\ldots,X_n\right>$ by Theorem~\ref{thm:null}.

In the case of $K=\bar{k}$, (3) and (4) follow from the fact that $\sqrt[K]{I}=\sqrt{I}$, as observed in Remark~\ref{Rmk:NullAlgClosed}. 
\end{proof}

\begin{remark}\label{rem:finiteCounter}
\rm{The $K$-radical of an ideal of $S$ need not be homogeneous when $K$ is finite. For example, let $K=k=\F_2$. Consider the ring $S=\F_2[X_0,X_1]$ and the ideal $I=\left<X_0\right>$ in $S$. Then $I+\Gamma_q(\F_2)=\left<X_0,X_1^2-X_1\right>$. Thus, by the Affine $\Fq$-Nullstellensatz and the Hilbert $K$-Nullstellensatz, $\sqrt[\F_2]{I}=\left<X_0,X_1^2-X_1\right>$, which is not a homogeneous ideal because $X_1\notin \I(\ZZ_{\F_2}(I))$. Therefore, for the finite field $K$, the vanishing ideal of a nonempty projective variety $\V_K(I)$ for a homogeneous ideal $I$ of $S$ can be a proper subset of the $K$-radical of the ideal $I$. 
}
\end{remark}

\subsection{Projective Nullstellensatz for finite fields}
For any field $F$, let $\Gamma_q^*(F)$ denote the ideal in $F[X_0,\ldots,X_n]$ generated by all polynomials of the form $X_i^q X_j-X_j^q X_i$ for $0\le i<j\le n$. Assume that $K=\Fq$, and recall that $k\subseteq K\subseteq \bar{k}$. Mercier and Rolland ~\cite{MR} proved the very weak form of the projective Nullstellensatz for finite fields by determining the vanishing ideal of the projective space $\PP^n_{K}$ in the polynomial ring $S=k[X_0,\ldots,X_n]$. We refer to this result as the \emph{Very Weak Projective $\Fq$-Nullstellensatz}. Below, we present a more general version of this result and refer to the most relevant proofs here. 

\begin{theorem}[Very Weak Projective $\Fq$-Nullstellensatz]\label{thm:VeryWeakFiniteProj}
The vanishing ideal of $\PP^n_K$ in $S$ is 
\begin{equation}\label{eq:ProjFiniteNull1}
\I(\PP^n_K)=\Gamma_q^*(k).
\end{equation}
Moreover, if $F$ is an algebraic extension of $K$, then the vanishing ideal $\I_F(\PP^n_K)$ considered as an ideal of $F[X_0,\ldots,X_n]$ satisfies  
\begin{equation}\label{eq:ProjFiniteNull2}
\I_F(\PP^n_K)=\Gamma_q^*(F).
\end{equation}
\end{theorem}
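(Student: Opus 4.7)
The plan is to prove equation~\eqref{eq:ProjFiniteNull2} first and deduce~\eqref{eq:ProjFiniteNull1} from it by a base-change argument. The inclusion $\Gamma_q^*(F)\subseteq \I_F(\PP^n_K)$ is immediate from the Fermat identity $a^q=a$ for $a\in\Fq$: each generator $X_i^q X_j-X_j^q X_i$ is homogeneous, and $a_i^q a_j-a_j^q a_i=a_i a_j-a_j a_i=0$ at every $[a_0:\cdots:a_n]\in\PP^n_K$.

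For the reverse inclusion, I would let $f\in F[X_0,\ldots,X_n]$ be homogeneous of degree $d\ge 1$ vanishing on $\PP^n_K$ (the case $d=0$ forces $f=0$ since $\PP^n_K$ is nonempty). Since $f$ vanishes on every nonzero vector of $K^{n+1}$ and $f({\bf 0})=0$ by homogeneity of positive degree, $f$ vanishes on all of $\A^{n+1}_K$. Applying the Affine $\Fq$-Nullstellensatz~\eqref{eq:FiniteNull2} in $n+1$ variables then gives $f\in J$, where $J:=\langle X_0^q-X_0,\ldots,X_n^q-X_n\rangle\subseteq F[X_0,\ldots,X_n]$. The identity $X_i^q X_j-X_j^q X_i=X_j(X_i^q-X_i)-X_i(X_j^q-X_j)$ shows $\Gamma_q^*(F)\subseteq J$, so the task reduces to proving that every homogeneous element of $J$ already lies in $\Gamma_q^*(F)$.

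This last step is the genuine content of the theorem, and here I would invoke the projective reduction of Beelen, Datta and Ghorpade~\cite{BDG} (as exposited in Ghorpade~\cite{G}). The congruence $X_i^q X_j\equiv X_i X_j^q\pmod{\Gamma_q^*(F)}$ serves as a rewriting rule that carries every polynomial to a canonical (confluent) normal form modulo $\Gamma_q^*(F)$; the resulting normal-form monomials are either bounded (all exponents strictly less than $q$) or have a unique variable $X_i$ with exponent $\ge q$ and no higher-indexed variable present. The hard part will be the combinatorial verification that these normal forms of a fixed total degree remain $F$-linearly independent modulo $J$ (equivalently, as functions on $\A^{n+1}_K$, exploiting the basis of $F[X_0,\ldots,X_n]/J\cong F^{q^{n+1}}$ given by monomials with each exponent at most $q-1$). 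Granting this, the normal form of $f$ modulo $\Gamma_q^*(F)$ must vanish, forcing $f\in\Gamma_q^*(F)$; in the paper this hurdle is cleared by appealing directly to~\cite{BDG,G} rather than redeveloping the machinery.

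Finally,~\eqref{eq:ProjFiniteNull1} follows from~\eqref{eq:ProjFiniteNull2} specialised to $F=K$ by restriction of scalars. One has $\I(\PP^n_K)=\I_K(\PP^n_K)\cap S=\Gamma_q^*(K)\cap S$, and expanding any representation $\sum g_{ij}(X_i^q X_j-X_j^q X_i)$ with $g_{ij}\in K[X_0,\ldots,X_n]$ in a $k$-basis of $K$ shows that the intersection equals $\Gamma_q^*(k)$, since the defining generators already have coefficients in $k$.
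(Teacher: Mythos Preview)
Your proposal is correct and mirrors the paper's proof: equation~\eqref{eq:ProjFiniteNull2} is obtained by citing Ghorpade~\cite[Theorem~3.3]{G} (equivalently Beelen--Datta--Ghorpade~\cite[Corollary~2.6]{BDG}) for the projective-reduction argument you sketch, and~\eqref{eq:ProjFiniteNull1} is then deduced by specialising to $F=K$ and restricting coefficients to $k$, using that the generators $X_i^qX_j-X_j^qX_i$ already lie in $k[X_0,\ldots,X_n]$. The only difference is that the paper simply invokes the references without unpacking the normal-form machinery, whereas you outline it.
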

\begin{proof}
The proof of \eqref{eq:ProjFiniteNull2} follows from Ghorpade~\cite[Theorem 3.3]{G} (or Beelen, Datta and Ghorpade~\cite[Corollary 2.6]{BDG}). Thus for $F=K=\Fq$, we obtain $\I_K(\PP^n_K)=\Gamma_q^*(K)$. Now,  
$$
\I(\PP^n_K)=\I_K(\PP^n_K)|_{k[X_0,\ldots,X_n]}=\Gamma_q^*(K)|_{k[X_0,\ldots,X_n]}=\Gamma_q^*(k), 
$$
where the last equality follows because of the fact that the generators $X_i^q X_j-X_j^q X_i$, $0\le i<j\le n$ of $\Gamma_q^*(K)$ are also in $k[X_0,\ldots,X_n]$. 
\end{proof}

As mentioned in Gimenez, Ruano and San-Jos\' e~\cite{GRS}, the projective Nullstellensatz can be obtained as a corollary of the Hilbert Nullstellensatz. To see this,  let $I$ be a homogeneous ideal of $S$. If $\V_K(I)\ne \emptyset$, then
\begin{eqnarray*}
	\I(\V_{K}(I))=\I(\V_{\overline{k}}(I+\Gamma_q^*(k)))=\sqrt{I+\smash[b]{\Gamma_q^*}{(k)}}
\end{eqnarray*}
where the first equality follows from the fact that $\PP^n_{K}=\V_{\overline{k}}(\Gamma_q^*(k))$. Note that this kind of argument does not work if $K$ is an infinite, non-algebraically closed field, because in that case, $\PP^n_K$ is not a projective variety in the projective space $\PP^n_{\overline{k}}$. For $k=K=\Fq$, a direct proof of this projective Nullstellensatz for finite fields, without using the Hilbert Nullstellensatz, is given in Jaramillo, Vaz Pinto and Villarreal~\cite[Theorem~3.13]{JVV}. Beelen, Datta and Ghorpade~\cite[Example~3.6]{BDG} (or Ghorpade~\cite[Example~3.5]{G}) observed that $I+\Gamma_q^*(k)$ need not be a radical ideal even if $I$ is a radical ideal, unlike in the case of affine algebraic varieties. In light of this and to strengthen the projective Nullstellensatz, Gimenez, Ruano and San-Jos\' e~\cite{GRS}, inspired by the proof of Jaramillo, Vaz Pinto and Villarreal~\cite{JVV} for the above projective Nullstellensatz, proved that one can obtain the projective Nullstellensatz using a computationally efficient set than the radical of certain ideal in the case of $k=K$. To state their result, we recall some terminology from commutative algebra. Let $\m$ denote the unique homogeneous maximal ideal of $S$, i.e.,  $\m=\left<X_0,\ldots,X_n\right>$. Let $I$ and $J$ be two ideals of $S$. The \emph{ideal quotient of $I$ with respect to $J$} is the set
$$
I:J=\{f\in S: fJ\subseteq I\},
$$
and the \emph{saturation of $I$ with respect to $J$} is the set 
$$
I:J^{\infty} =\bigcup_{j\ge 1} (I:J).
$$
It can be shown that $I:J$ and $I:J^\infty$ are the ideals of $S$ (see, for instance, Cox, Little and O'Shea~\cite[Chapter~4, Section~4]{CLO}). Now, Gimenez, Ruano and San-Jos\' e~\cite{GRS} proved the following: 
Suppose $k=K$ and $I$ is a homogeneous ideal of $S$. If 
 $\V_K(I)\ne \emptyset$, then 
 $$
 \I(\V_{K}(I))=(I+\Gamma^*_q(k)):\m^{\infty}.
 $$

In simple terms, it states that for $k=K$ and for a homogeneous ideal $I$ of $S$, if $\V_K(I)$ is nonempty, then the vanishing ideal of $\V_K(I)$ in $S$ consists of the polynomials $f$ in $S$ for which there exists an integer $j\ge 1$ such that $fX_0^{a_0}\cdots X_n^{a_n}\in I+\Gamma^*_q(k)$ for all tuples $(a_0,\ldots,a_n)\in \Z_{\ge 0}$ satisfying $a_0+\cdots+a_n=j$. In the following theorem, we prove that an even simpler set suffices for the projective Nullstellensatz to hold. This approach provides an alternative, direct proof of the result without relying on the Hilbert Nullstellensatz. In line with the terminologies introduced earlier, we refer to our version of the projective Nullstellensatz as the \emph{Projective $\Fq$-Nullstellensatz}. Note that as before, for any field $F$, the ideal $\Gamma_q(F)$ in $F[X_0,\ldots,X_n]$ denotes the ideal generated by $X_0^q-X_0,\ldots,X_n^q-X_n$.

\begin{theorem}[Projective $\Fq$-Nullstellensatz]\label{thm:projFiniteNull1} 
Let $I$ be a homogeneous ideal of $S$ generated by the homogeneous polynomials $h_1,\ldots,h_r$ of degrees $d_1,\ldots,d_r$, respectively. Define $d:=~\hspace{-2mm}(d_1+\cdots+d_r)(q-1)+1$. 
Let $\mathfrak{d}$ denote the ideal generated by the $d$-th powers of $X_0,\ldots,X_n$ in $S$, i.e., $\mathfrak{d}=\left<X_0^d,\ldots,X_n^d\right>$. Then    
\begin{enumerate}
\item $\I(\V_{K}(I))=(I+\smash[b]{\Gamma_q^*}{(k)}):\mathfrak{d}$ if $\V_K(I)$ is nonempty.
\item $\I(\V_K(I))=S$ if $\V_K(I)$ is empty. In this case, 
 either $I=S$ or $I+\Gamma_q(k)=\m
 $.
\end{enumerate}
Furthermore, let $F$ be an algebraic extension of $K$ and let $J$ be a homogeneous ideal of $F[X_0,\ldots,X_n]$ generated by the homogeneous polynomials $t_1,\ldots,t_u$ of $K[X_0,\ldots,X_n]$ of degrees $s_1,\ldots,s_u$, respectively. If $s:=(s_1+\cdots+s_u)(q-1)+1$, then the vanishing ideal $\I_F(\V_K(J))$ considered as an ideal in $F[X_0,\ldots,X_n]$ satisfies  
\begin{enumerate}
\item[(3)] $\I_F(\V_{K}(J))=(J+\smash[b]{\Gamma_q^*}{(F)}):\mathfrak{s}_F$ if $\V_K(J)$ is nonempty. Here, $\mathfrak{s}_F$ denotes the ideal generated by $X_0^s,\ldots,X_n^s$ in $F[X_0,\ldots,X_n]$. 
\item[(4)] $\I_F(\V_K(J))=F[X_0,\ldots,X_n]$ if $\V_K(J)$ is empty. In this case, 
 either the ideal $J=F[X_0,\ldots,X_n]$ or $J+\Gamma_q(F)=\m_F
 $ where $\m_F$ denotes the ideal generated by $X_0,\ldots,X_n$ in $F[X_0,\ldots,X_n]$. 
\end{enumerate}
\end{theorem}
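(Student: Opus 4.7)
The plan is to handle the cases $\V_K(I) = \emptyset$ and $\V_K(I) \neq \emptyset$ separately, with parts (3)–(4) following by the same arguments carried out over $F$ in place of $k$, using the Affine $\Fq$-Nullstellensatz over $F$. For the empty case, note that $\V_K(I) = \emptyset$ is equivalent to $\ZZ_K(I) \subseteq \{\mathbf 0\}$ in $\A^{n+1}_K$: since $I$ is homogeneous, either $1 \in I$ (so $I = S$) or $\mathbf 0 \in \ZZ_K(I)$, whence $\ZZ_K(I) = \{\mathbf 0\}$; in this latter case the Affine $\Fq$-Nullstellensatz yields $I + \Gamma_q(k) = \I(\{\mathbf 0\}) = \m$.

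For the nonempty case, the containment $(I + \Gamma_q^*(k)) : \mathfrak d \subseteq \I(\V_K(I))$ follows directly from homogeneity: the ideal quotient of homogeneous ideals is homogeneous, so testing on homogeneous $f$, the assumption $f X_i^d \in I + \Gamma_q^*(k)$ combined with the fact that $\Gamma_q^*(k) \subseteq \I(\A^{n+1}_K)$ (by the Very Weak Projective $\Fq$-Nullstellensatz) implies $f(\mathbf a)\,a_i^d = 0$ for any representative $\mathbf a$ of a point $[\mathbf a] \in \V_K(I)$ with $a_i \neq 0$, forcing $f(\mathbf a) = 0$.

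The reverse containment $\I(\V_K(I)) \subseteq (I + \Gamma_q^*(k)) : \mathfrak d$ is the substantive step. For $f \in \I(\V_K(I))$ homogeneous of degree $e$, I would fix $i$ and dehomogenize at $X_i$ to obtain $\tilde f, \tilde h_\ell \in R_i := k[X_0, \ldots, \widehat{X_i}, \ldots, X_n]$. Then $\tilde f$ vanishes on $\ZZ_K(\tilde I)$, where $\tilde I = \langle \tilde h_1, \ldots, \tilde h_r\rangle$, so by the Affine $\Fq$-Nullstellensatz one writes $\tilde f = \sum_\ell \tilde u_\ell \tilde h_\ell + \sum_{j \neq i}\tilde v_j(X_j^q - X_j)$ in $R_i$. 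Substituting $X_j \mapsto X_j/X_i$ in this identity (as rational functions) and multiplying through by $X_i^{e+d}$ rehomogenizes both sides: the first sum produces elements of $I$ (since $h_\ell$ is recovered from the $X_i$-homogenization of $\tilde h_\ell$ up to a factor of $X_i$), and each term in the second sum lies in $\Gamma_q^*(k)$ by the identity $X_i(X_j^q - X_i^{q-1}X_j) = -(X_i^q X_j - X_j^q X_i)$.

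The main obstacle is to find a representation of $\tilde f$ with $\deg \tilde u_\ell + d_\ell \leq e + d$ and $\deg \tilde v_j + q + 1 \leq e + d$, which is required so that the powers of $X_i$ introduced by the rehomogenization are all nonnegative. The specific value $d = (d_1 + \cdots + d_r)(q-1) + 1$ is chosen to absorb the worst case, which one controls by first reducing $\tilde f$ and each $\tilde h_\ell$ modulo $\Gamma_q^{(i)}(k) := \langle X_j^q - X_j : j \neq i\rangle$ so that every variable-wise exponent drops below $q$, and then carefully accounting for the $\tilde v_j$ contributions accumulated during this reduction together with the representation of the reduced $\tilde f$ in terms of the reduced $\tilde h_\ell$'s in the finite-dimensional quotient $R_i/\Gamma_q^{(i)}(k)$.
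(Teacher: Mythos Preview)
Your treatment of parts (2) and (4) matches the paper's, and your argument for the inclusion $(I+\Gamma_q^*(k)):\mathfrak d \subseteq \I(\V_K(I))$ is correct and in fact more direct than the paper's (the paper routes this through $\sqrt{I+\Gamma_q^*(k)}$, which it needs for a later corollary, whereas you simply evaluate).

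The gap is in the reverse inclusion. You correctly identify degree control as the obstacle, but your proposed fix does not deliver the specific bound $d=(d_1+\cdots+d_r)(q-1)+1$. After reducing modulo $\Gamma_q^{(i)}(k)$ you land in a ring whose reduced representatives have degree up to $n(q-1)$; writing $\bar f$ in terms of the $\bar h_\ell$ in the quotient $R_i/\Gamma_q^{(i)}(k)$ gives coefficients $\bar u_\ell$ whose degrees are bounded in terms of $n$, not in terms of $\sum d_\ell$. Since $d$ is independent of $n$, the inequalities $\deg\tilde u_\ell \le e+d-d_\ell$ and $\deg\tilde v_j \le e+d-q-1$ that you need for rehomogenization can fail for large $n$. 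Nothing in ``carefully accounting'' for the $\tilde v_j$ contributions closes this; you would need a further idea that ties the coefficient degrees to $\sum d_\ell$ rather than to $n$.

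The paper bypasses all of this bookkeeping with an explicit construction: for each $j$ it sets
\[
\ell_j \;=\; X_j\prod_{\ell=1}^r\bigl(X_j^{d_\ell(q-1)}-h_\ell^{\,q-1}\bigr),
\qquad
g_j \;=\; X_j^{\,d}-\ell_j,
\]
so that $g_j$ is homogeneous of degree exactly $d$ and lies in $I$ (every term of $X_j^d-\ell_j$ carries some $h_\ell$), while $\ell_j$ vanishes at every $K$-rational point outside $\V_K(I)$ (if $h_\ell(\mathbf a)\ne 0$ then $h_\ell(\mathbf a)^{q-1}=1=a_j^{d_\ell(q-1)}$ when $a_j\ne 0$, and the leading $X_j$ kills the case $a_j=0$). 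For homogeneous $f\in\I(\V_K(I))$ one then has $X_j^{\,d}f=g_jf+\ell_jf$ with $g_jf\in I$ and $\ell_jf$ homogeneous and vanishing on all of $\PP^n_K$, hence $\ell_jf\in\Gamma_q^*(k)$ by the Very Weak Projective $\Fq$-Nullstellensatz. This is exactly what pins down the value $(d_1+\cdots+d_r)(q-1)+1$; your dehomogenize-then-rehomogenize route would recover it only if you first inserted the identity $1=\bigl(1-\prod_\ell(1-\tilde h_\ell^{\,q-1})\bigr)+\prod_\ell(1-\tilde h_\ell^{\,q-1})$ in $R_i$, which is the dehomogenized form of the paper's construction.
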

\begin{proof}
(1) Since $I$, $\Gamma_q^*(k)$ and $\mathfrak{d}$ are homogeneous ideals, $({I+\smash[b]{\Gamma_q^*}(k)}):\mathfrak{d}$ is a homogeneous ideal. Let $f\in ({I+\smash[b]{\Gamma_q^*}(k)}):\mathfrak{d}$ be a homogeneous polynomial. Without loss of generality, we can assume that $f$ is nonzero. Now, observe that $f$ is a non-constant polynomial. Indeed, if $f$ is a nonzero constant polynomial, then $\mathfrak{d}\subseteq I+\Gamma_q^*(k)$, which implies that $\V_K(I)=\V_K(I+\Gamma_q^*(k))\subseteq V_K(\mathfrak{d})=\emptyset$, a contradiction to the hypothesis. Given the polynomial $f\in ({I+\smash[b]{\Gamma_q^*}(k)}):\mathfrak{d}$, it follows that
$$
X_j^{d} f\in I+\Gamma_q^*(k) \quad \text{for each }0\le j\le n.
$$
Since $d\ge 1$, we obtain 
$$
(X_j f)^{d}=X_j^{d} f\cdot f^{d-1}\in I+\Gamma_q^*(k)
$$ 
for $0\le j\le n$. Therefore, 
$X_j f\in \sqrt{I+\smash[b]{\Gamma_q^*}(k)}$ for $0\le j\le n$. In particular, $\mu f\in \sqrt{I+\smash[b]{\Gamma_q^*}(k)}$ for each nonzero term $\mu$ of $f$ because $f$ is a non-constant homogeneous polynomial. Thus, 
$$
f^2=\left(\sum_{\substack{\mu \text{ nonzero}\\ \text{ term of }f}} \mu f \right)\in \sqrt{I+\smash[b]{\Gamma_q^*}(k)}.
$$
Consequently, $f\in \sqrt{I+\smash[b]{\Gamma_q^*}(k)}$. Thus, 
$$(I+\Gamma_q^*(k)):\mathfrak{d}\subseteq \I(\V_K((I+\Gamma_q^*(k)):\mathfrak{d}))\subseteq \I(\V_K(\sqrt{I+\smash[b]{\Gamma_q^*}{(k)}}\hspace{0.4mm}))=\I(\V_K(I)).$$
For the other containment, let $f\in \mathsf{I}(\V_K(I))$ be a homogeneous polynomial. 
Let $0\le j\le n$. Define 
$$
g_j(X_0,\ldots,X_n):=X_j^{d}-X_j\prod_{i=1}^r \left(X_j^{d_i(q-1)}-h_i^{q-1}\right)
$$
where $h_1,\ldots,h_r$ are homogeneous polynomials of degrees $d_1,\ldots,d_r$, respectively, that generate $I$, and $d=(d_1+\cdots+d_r)(q-1)+1$, as defined in the theorem. Then $g_j$ is a homogeneous polynomial in $I$ of degree $d$. Further, consider the homogeneous polynomial
$$
\ell_j(X_0,\ldots,X_n):=X_j^{d}-g_j(X_0,\ldots,X_n) 
$$
in $S$. Then $g_j({\bf a})=0$ for every ${\bf a}\in \V_K(I)$; and $\ell_j({\bf a})=0$ for every ${\bf a}\notin \V_K(I)$. Now 
$$
g_j+\ell_j=X_j^{d}, 
$$
which implies that 
$$
g_jf+\ell_jf=X_j^{d} f.
$$
Here, $g_jf\in I$ because $g_j\in I$. Moreover, $\ell_jf$ vanishes on all the points of $\PP^n_K$ and is homogeneous because $\ell_j$ and $f$ are homogeneous. Thus by Theorem \ref{thm:VeryWeakFiniteProj}, $\ell_jf\in \Gamma_q^*(k)$ and hence 
$$
X_j^{d} f\in I+\Gamma_q^*(k).
$$
Since this is true for any $0\le j\le n$, we obtain  $f\in (I+\smash[b]{\Gamma_q^*}{(k)}):\mathfrak{d}$. This concludes the proof. 

(2) We follow similar steps as we did in the proof of part (2) of Theorem \ref{thm:projNullinf}. It is clear that $1\in \I(\V_K(I))$ when $\V_K(I)$ is empty. Thus, $\I(\V_K(I))=S$. Since $I$ is a homogeneous ideal, we have either $\ZZ_K(I)=\emptyset$ or $\ZZ_K(I)=\{{\bf 0}\}$ in $\A^{n+1}_K$. In the first case, by Corollary~\ref{cor:BezoutNull}, there exists a polynomial $f$ in $I$ such that $\ZZ_K(f)=\emptyset$. Since $I$ is homogeneous, all the homogeneous components of $f$ are in $I$. This implies that there exists a nonzero constant component of $f$ and hence $1\in I$. In the second case, i.e., when $\ZZ_K(I)=\{{\bf 0}\}$, we obtain, by Theorem \ref{thm:finiteNull}, $I+\Gamma_q(k)=\I(\{{\bf 0}\})=\m$, which is homogeneous. 

For parts (3) and (4), the proofs follow analogously by replacing the field $k$ with $F$; the ideal $I$ with $J$; the homogeneous polynomials $h_1,\ldots,h_r$ in $S$ of degrees $d_1,\ldots,d_r$ with $t_1,\ldots,t_u$ in $F[X_0,\ldots,X_n]$ of degrees $s_1,\ldots,s_u$; $d$ with $s$; and the ideal $\mathfrak{d}$ with $\mathfrak{s}_F$, above. 
Note that the generating set of $J$ in $K[X_0,\ldots,X_n]$ is required to show that $\ell_j({\bf a})=0$ for every ${\bf a}\notin \V_K(J)$, which is a crucial step of the proof.
\end{proof}

\begin{cor}\label{thm:projFiniteNull2}
Let $I$ be a homogeneous ideal of $S$ generated by the homogeneous polynomials $h_1,\ldots,h_r$ of degrees $d_1,\ldots,d_r$, respectively. Define $d:=(d_1+\cdots+d_r)(q-1)+1$. 
Let $\mathfrak{d}$ denote the ideal generated by the $d$-th powers of $X_0,\ldots,X_n$ in $S$, i.e., $\mathfrak{d}=\left<X_0^d,\ldots,X_n^d\right>$. Then    
\begin{enumerate}
\item $\I(\V_{K}(I))=(I+\smash[b]{\Gamma_q^*}{(k)}):\mathfrak{d}=(I+\smash[b]{\Gamma_q^*}{(k)}):\mathfrak{m}^\infty=
\sqrt{I+\smash[b]{\Gamma_q^*}{(k)}}$ if $\V_K(I)$ is nonempty.
\item $\I(\V_K(I))=S$ if $\V_K(I)$ is empty. In this case, either $I=S$ or $I+\Gamma_q(k)=\m$.
\end{enumerate}
Furthermore, let $F$ be an algebraic extension of $K$ and let $J$ be a homogeneous ideal of $F[X_0,\ldots,X_n]$ generated by the homogeneous polynomials $t_1,\ldots,t_u$ of $K[X_0,\ldots,X_n]$ of degrees $s_1,\ldots,s_u$, respectively. If $s:=(s_1+\cdots+s_u)(q-1)+1$, then the vanishing ideal $\I_F(\V_K(J))$ considered as an ideal in $F[X_0,\ldots,X_n]$ satisfies  
\begin{enumerate}
\item[(3)] $\I_F(\V_{K}(J))=(J+\smash[b]{\Gamma_q^*}{(F)}):\mathfrak{s}_F=(J+\smash[b]{\Gamma_q^*}{(F)}):\mathfrak{m}_F^{\infty}=\sqrt{J+\smash[b]{\Gamma_q^*}{(F)}}$ if $\V_K(J)$ is nonempty. Here, $\mathfrak{m}_F$ and $\mathfrak{s}_F$ denote the ideals generated by $X_0,\ldots,X_n$ and $X_0^s,\ldots,X_n^s$ in $F[X_0,\ldots,X_n]$, respectively. 
\item[(4)] $\I_F(\V_K(J))=F[X_0,\ldots,X_n]$ if $\V_K(J)$ is empty. In this case, either the ideal $J=F[X_0,\ldots,X_n]$ or $J+\Gamma_q(F)=\m_F$. 
\end{enumerate}

\end{cor}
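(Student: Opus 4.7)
The plan is to deduce the entire chain of equalities from Theorem \ref{thm:projFiniteNull1} combined with a short chain of ideal inclusions. Set $L := I + \Gamma_q^*(k)$, and note that $L$, $\mathfrak{d}$, $\mathfrak{m}$, and $\sqrt{L}$ are all homogeneous ideals, so the colon and saturation operations produce homogeneous ideals as well. I intend to establish the chain
$$
L \subseteq L:\mathfrak{d} \subseteq L:\mathfrak{m}^\infty \subseteq \sqrt{L} \subseteq \I(\V_K(I)) = L:\mathfrak{d},
$$
which forces all four middle ideals to coincide. The final equality is precisely part~(1) of Theorem \ref{thm:projFiniteNull1}, and $\sqrt{L} \subseteq \I(\V_K(I))$ is immediate since $\V_K(\sqrt{L}) = \V_K(L) = \V_K(I)$.

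The inclusion $L:\mathfrak{d} \subseteq L:\mathfrak{m}^\infty$ rests on a pigeonhole observation: in $n+1$ variables, any monomial of total degree at least $(n+1)(d-1)+1$ must contain some $X_i$ to a power $\geq d$, hence $\mathfrak{m}^{(n+1)(d-1)+1} \subseteq \mathfrak{d}$, and therefore $L:\mathfrak{d} \subseteq L:\mathfrak{m}^{(n+1)(d-1)+1} \subseteq L:\mathfrak{m}^\infty$.

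The heart of the argument is the inclusion $L:\mathfrak{m}^\infty \subseteq \sqrt{L}$, and this is where I expect the only real work to lie, although the computation will closely mirror the first half of the proof of Theorem \ref{thm:projFiniteNull1}. Let $f$ be a homogeneous element of $L:\mathfrak{m}^\infty$; then there exists $j \geq 1$ with $X_i^j f \in L$ for every $0 \leq i \leq n$. Multiplying by $f^{j-1}$ gives $(X_i f)^j = X_i^j f \cdot f^{j-1} \in L$, so $X_i f \in \sqrt{L}$ for each $i$. If $f$ is a non-constant homogeneous polynomial, every monomial of $f$ is divisible by some $X_i$, so term-by-term each summand of $f \cdot f$ lies in $\sqrt{L}$, giving $f^2 \in \sqrt{L}$ and hence $f \in \sqrt{L}$. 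If instead $f$ is a nonzero constant, then $X_i^j \in L$ for every $i$, which forces $\V_K(L) = \V_K(I)$ to be empty, contradicting the hypothesis.

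Parts (3) and (4) go through by the same arguments with $F$, $J$, $\mathfrak{s}_F$, $\mathfrak{m}_F$, and $\Gamma_q^*(F)$ in place of $k$, $I$, $\mathfrak{d}$, $\mathfrak{m}$, and $\Gamma_q^*(k)$; Theorem \ref{thm:projFiniteNull1} supplies the analogous endpoint equality using the hypothesis that $J$ is generated by polynomials from $K[X_0,\ldots,X_n]$, and the non-constant versus constant dichotomy is unchanged. Part~(2) is identical to part~(2) of Theorem \ref{thm:projFiniteNull1} and requires no further argument.
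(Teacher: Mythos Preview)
Your proposal is correct and follows essentially the same route as the paper's proof: the paper likewise reduces part~(1) to showing the chain $(I+\Gamma_q^*(k)):\mathfrak{d}\subseteq (I+\Gamma_q^*(k)):\mathfrak{m}^\infty \subseteq \sqrt{I+\Gamma_q^*(k)}$, invokes Theorem~\ref{thm:projFiniteNull1} and the trivial inclusion $\sqrt{L}\subseteq \I(\V_K(I))$ to close the loop, uses the same pigeonhole bound $(d-1)(n+1)+1$ for the first inclusion, and proves the second inclusion via the identical $X_i^j f\in L \Rightarrow (X_i f)^j\in L \Rightarrow f^2\in\sqrt{L}$ argument. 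Parts~(2)--(4) are handled in the paper exactly as you describe.
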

\begin{proof} 
For part (1), observe that all ideals in the required equalities are homogeneous. By 
Theorem~\ref{thm:projFiniteNull1}, along with the observation that $\sqrt{I+\smash[b]{\Gamma_q^*}{(k)}}\subseteq \I(\V_K(I))$ in $S$, it is enough to show that 
$$
(I+\smash[b]{\Gamma_q^*}{(k)}):\mathfrak{d}\subseteq (I+\smash[b]{\Gamma_q^*}{(k)}):\mathfrak{m}^\infty \subseteq 
\sqrt{I+\smash[b]{\Gamma_q^*}{(k)}}.
$$
For the first containment, note that $(I+\smash[b]{\Gamma_q^*}{(k)}):\mathfrak{d}\subseteq (I+\smash[b]{\Gamma_q^*}{(k)}):\mathfrak{m}^{(d-1)(n+1)+1}$. Indeed, if $f\in  (I+\smash[b]{\Gamma_q^*}{(k)}):\mathfrak{d}$, then $X_i^d f\in I+\smash[b]{\Gamma_q^*}{(k)}$ for $0\le i\le n$, and hence, by the pigeonhole principle, $X_0^{a_0}\cdots X_n^{a_n} f\in I+\smash[b]{\Gamma_q^*}{(k)}$ for any tuple $(a_0,\ldots,a_n)\in \Z_{\ge 0}$ satisfying $a_0+\cdots+a_n=(d-1)(n+1)+1$. For the second containment, let $g \in  (I+\smash[b]{\Gamma_q^*}{(k)}):\mathfrak{m}^\infty$ be a homogeneous polynomial. Without loss of generality, we can assume that $g$ is nonzero. Then $g$ is non-constant, since $\V_K(I)$ is nonempty. Now there exists $j\ge 1$ such that $X_0^{a_0}\cdots X_n^{a_n} g\in I+\smash[b]{\Gamma_q^*}{(k)}$ for any tuple $(a_0,\ldots,a_n)\in \Z_{\ge 0}$ satisfying $a_0+\cdots+a_n=j$. In particular, $X_i^j g\in I+\smash[b]{\Gamma_q^*}{(k)}$ for every $0\le i\le n$. Thus, $(X_i g)^j \in I+\smash[b]{\Gamma_q^*}{(k)}$ for  $0\le i\le n$. Therefore, $X_i g\in \sqrt{I+\smash[b]{\Gamma_q^*}{(k)}}$. Since $g$ is a non-constant homogeneous polynomial, $\mu g\in \sqrt{I+\smash[b]{\Gamma_q^*}{(k)}}$ for every nonzero term $\mu$ of $g$. Thus, $g^2\in \sqrt{I+\smash[b]{\Gamma_q^*}{(k)}}$, and hence, $g\in \sqrt{I+\smash[b]{\Gamma_q^*}{(k)}}$, as required.

For part (3), the proof follows analogously by replacing the field $k$ with $F$; the ideal $I$ with $J$; $d$ with $s$; and the ideal $\mathfrak{d}$ with $\mathfrak{s}_F$, above.

Parts $(2)$ and $(4)$ follow directly by Theorem~\ref{thm:projFiniteNull1}.  
\end{proof}

\section{Laksov-Westin conjectures on Nullstellensatz}

Laksov and Westin \cite{LW} introduced some conjectures on Nullstellensatz to strengthen the Hilbert $K$-Nullstellensatz. The strengthening is based on considering a smaller set in place of $P_K(m)$. Laksov in \cite[Section~5]{L} showed that the following set 
$$
P_K^0(m):=\{p\in k[y_0,\ldots,y_m]: p \text{ is homogeneous with only the trivial zero}\}
$$
will not work in place of $P_K(m)$ with counterexamples in both zero and nonzero characteristics. 
We define the following sets to state the conjectures of Laksov and Westin \cite[Section~8]{LW}.

\begin{enumerate}
\item $R_1=\{p(y_0^n,y_1,\ldots ,y_m): m,n\in \Z_{\ge 0} \text{ with }n\ge 1 \text{ and }p\in P^0_K(m)\}$.
\vspace{2mm}
\item $R_2=\{p(q(y_0,y_1,\ldots,y_n), y_{1+n},\ldots ,y_{m+n}): m,n\in \Z_{\ge 0}, p\in P^0_K(m), q\in P_K^0(n) \}$.
\vspace{1mm}
\vspace{-3mm}
\item $R_3=\{p_i(\cdots(p_2(p_1(y_0,y_1,\ldots,y_{m_1}),y_{m_1+1},\ldots,y_{m_2})\cdots),\ldots,y_{m_i}): i\ge 1, \\
\text{\hspace{10mm}} m_1,\ldots,m_i\in \Z_{\ge 0} \text{ with }m_1\le \cdots \le m_i, \text{ and } 
p_j\in P^0_K(m_j-m_{j-1}) \text{ for }\\
\text{\hspace{1cm}}   1\le j\le i \text{ with }m_0:=0\}$.
\vspace{2mm}
\item $R_4=\{p(y_0,y_1,\ldots,y_{m_1},y_{m_1+1},\ldots,y_{m_2},\ldots,y_{m_i}): i\ge 1, m_1,\ldots,m_i\in \Z_{\ge 0}, \text{ and} \\
\text{\hspace{1cm}$p$\text{ has }only the trivial zero and is quasi-homogeneous in the set of}  \\
\text{\hspace{1cm}variables } \{y_0,\ldots,y_{m_1}\},\{y_{m_1+1},\ldots,y_{m_2}\},\ldots,\{y_{m_{i-1}+1}, \ldots, y_{m_i}\}$ of \\
$\text{\hspace{1cm}\text{weights $w_1,w_2,\ldots,w_i$, respectively }where }w_1<w_2<\cdots<w_i\}$.
\end{enumerate}
Note that $R_1\subseteq R_2 \subseteq R_3 \subseteq R_4$. 

\begin{conjecture}[Laksov and Westin~\cite{LW}] \label{conjLakWes}
Denote by $R$ the polynomial ring $k[X_1,\ldots,X_n]$. Let $I$ be an ideal of $R$. For $1\le i\le 4$, let 
\begin{eqnarray*}
& R_i(I):=& \{f\in R: \text{ there exist }p\in R_i \text{ with }m\in \Z_{\ge 0},\text{ and } f_1,\ldots,f_m\in R \\
 & & \hspace{1.36cm}  \text{ such that } p(f,f_1,\ldots,f_m)\in I\}.
\end{eqnarray*} 

Then for any pair of fields $k\subseteq K$, $R_i(I)=\I(\ZZ_K(I))$ for $1\le i\le 4$.  
\end{conjecture}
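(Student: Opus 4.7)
The plan is to refute Conjecture~\ref{conjLakWes} for three of its four parts by producing a single common counterexample. Because the definitions force $R_1 \subseteq R_2 \subseteq R_3 \subseteq R_4$, the induced sets satisfy $R_1(I) \subseteq R_2(I) \subseteq R_3(I) \subseteq R_4(I) \subseteq \I(\ZZ_K(I))$, so a single pair $(I, f)$ with $f \in \I(\ZZ_K(I)) \setminus R_3(I)$ would simultaneously disprove the conjecture for $i = 1, 2, 3$.

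I would search for such a pair in the setting where the Nullstellensatz is most explicit, namely $k \subseteq K = \Fq$, because the Affine $\Fq$-Nullstellensatz determines $\I(\ZZ_K(I)) = I + \Gamma_q(k)$ completely. The most natural test case is $k = K = \Fq$ for small $q$ with a simple ideal $I$ and an $f$ taken to be a generator of $\Gamma_q(k)$, say $f = X_1^q - X_1$ with $I = \langle 0 \rangle$. The key structural observation is that every $p \in R_3$ inherits the property ``only the trivial zero over $K$'' from its building blocks in $P^0_K(\cdot)$: unravelling the composition from the outside, the outermost $p_i \in P^0_K$ is homogeneous with only the trivial $K$-zero, so if $p(a_0, \ldots, a_{m_i}) = 0$ for $a_j \in K$ then its innermost input and the other outer arguments must all vanish, and induction on depth closes the claim. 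This severely constrains substitutions: if $p(f, f_1, \ldots, f_m) \in I$ in $R$, then evaluating the identity at every point of $\A^n_K$ forces $f$ and all the $f_j$'s to vanish on $\A^n_K$, hence each $f_j$ already lies in $\Gamma_q(k)$ and is divisible by one of the $X_i^q - X_i$. Using the homogeneity of the outermost $p_i$ to absorb the common divisibility, the hypothesis reduces to an identity $p_i(1, h_1, \ldots, h_{m_i - m_{i-1}}) = 0$ in $k(X_1, \ldots, X_n)$, which corresponds to a rational zero of $p_i$ of a very constrained form; iterating this reduction inward using the homogeneity of each $p_j$ should ultimately force an inner $P^0_K$ block to have a nontrivial $K$-rational zero, contradicting its defining property.

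The main obstacle will be managing the combinatorics of iterated composition cleanly: each nested level contributes a fresh homogeneous building block, and the reduction by pulling out common divisors must carefully track how powers of the generators of $\Gamma_q(k)$ accumulate across layers, since the composed polynomial is not itself homogeneous. A secondary but essential step is verifying that the chosen counterexample does \emph{not} disprove the conjecture for $R_4$, which would be most directly established by exhibiting an explicit quasi-homogeneous $p \in R_4$ that witnesses $f$; the extra flexibility of $R_4$ (a single polynomial quasi-homogeneous across disjoint variable blocks with strictly increasing weights) is the key resource unavailable within $R_3$, and should allow a Rabinowitsch-type realisation parallel to the proof of Theorem~\ref{thm:null}, using a homogenization witness of the form produced by Proposition~\ref{prop:trivialzero}. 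If this verification succeeds, exactly three of the four conjectures fall to the same example, as the abstract promises.
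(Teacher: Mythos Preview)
Your example is essentially the paper's: the paper takes $k=K=\F_2$, $n=2$, $I=\langle X_1\rangle$, and shows $R_3(I)=I\subsetneq \langle X_1,X_2^2-X_2\rangle=\I(\ZZ_K(I))$; reducing modulo $X_1$ turns this into your $I=\langle 0\rangle$ in one variable. The layerwise strategy (work with the outermost $p_i$, force its arguments into $I$, then recurse on the inner composition $h_i$) is also the paper's. The gap is in the mechanism of each step. First, for $n\ge 2$ the assertion ``each $f_j\in\Gamma_q(k)$ is divisible by one of the $X_i^q-X_i$'' is false (e.g.\ $(X_1^q-X_1)+(X_2^q-X_2)$), so your plan as written only makes sense for $n=1$. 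More seriously, normalising to get $p_i(1,h_1,\ldots,h_{m_i-m_{i-1}})=0$ over the function field $k(X_1,\ldots,X_n)$ is not yet a contradiction: the hypothesis $p_i\in P^0_K$ rules out nontrivial zeros over $K$, not over a transcendental extension of $K$. And your proposed ``iteration inward'' cannot repair this, because the inner composition $h_i=p_{i-1}(\cdots)$ sits precisely in the slot you have just normalised to $1$; there is nothing left to unwind there.

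The paper supplies exactly the missing device. After reducing modulo $X_1$ one has $p_i(\ell_0,\ldots,\ell_{m_i-m_{i-1}})=0$ as an identity in $k[X_2]$; now factor out the highest common power of the \emph{prime} $X_2$ (not of $X_2^q-X_2$) from the $\ell_j$, say $\ell_j=X_2^{d}g_j$ with some $g_e$ satisfying $g_e(0)\neq 0$, and evaluate at $X_2=0$. Since $0\in K$, this produces an honest nontrivial $K$-point of $p_i$, contradicting $p_i\in P^0_K$ and forcing every $\ell_j$ (in particular $h_i$) to vanish modulo $X_1$; then one recurses on $h_i=p_{i-1}(h_{i-1},\ldots)$. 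Finally, your plan to verify that the example does \emph{not} refute the $R_4$ case is neither required nor carried out: the paper disproves the conjecture for $i=1,2,3$ and explicitly states that it is uncertain whether the same example handles $R_4$.
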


\emph{Counterexample for the sets $R_1,R_2,R_3$.} We use the same example which Laksov in \ \
\cite[Section 5(1)]{L} considered to give a counterexample for the set $P_K^0(m)$. Let $k=K=\F_2$ and $R=\F_2[X_1,X_2]$. Consider the ideal $I=\left<X_1\right>$ in $R$. Then as observed in Remark \ref{rem:finiteCounter}, $\I(\ZZ_K(I))=\sqrt[\F_2]{I}=\left<X_1, X_2^2-X_2\right>$. Note that it suffices to show that $\I(\ZZ_K(I))\ne R_3(I)$ because $R_1\subseteq R_2 \subseteq R_3$. We claim that
$$
R_3(I)=\left<X_1\right>.
$$
Let $f\in R_3(I)$. Then there exists a polynomial $p\in R_3$ with an integer $m\ge 0$ and polynomials $f_1,\ldots, f_m\in R$ such that $p(f,f_1,\ldots,f_m)\in I$. That is, there exist $i\in \N$ and $m_1,\ldots,m_i\in \Z$ where $0\le m_1\le \ldots\le m_i=m$, and 
$p_j\in P^0_K(m_j-m_{j-1})$ for $1\le j\le i$ with $m_0:=0$ such that 
$$
p=p_i(\cdots(p_2(p_1(y_0,y_1,\ldots,y_{m_1}),y_{m_1+1},\ldots,y_{m_2})\cdots),y_{m_{i-1}+1},\ldots,y_{m_i}).
$$
Write 
$$
h_i:=p_{i-1}(\cdots (p_2(p_1(f,f_1,\ldots,f_{m_1}),f_{m_1+1},\ldots,f_{m_2})\cdots),\ldots,f_{m_{i-1}}).
$$
Then $p_i(h_i,f_{m_{i-1}+1},\ldots,f_{m_i})\in I$ where $p_i\in P_K^0(m_i-m_{i-1})$. We prove that all the polynomials $h_i,f_{m_{i-1}+1},\ldots,f_{m_i}\in I$. To simplify our notations, write $h_i(X_1,X_2):=\ell_0(X_1,X_2)$ and $f_{m_{i-1}+j}(X_1,X_2):=\ell_j(X_1,X_2)$ for $1\le j\le m_i-m_{i-1}$. Assume the contrary, i.e., $\ell_s\notin I$ for at least one $s\in \{0,\ldots,m\}$. Since $p_i(\ell_0,\ell_1,\ldots,\ell_{m_i-m_{i-1}})\in I$, the polynomial
$$
p_i(\ell_0(0,X_2),\ell_1(0,X_2),\ldots,\ell_{m_i-m_{i-1}}(0,X_2))
$$
is identically zero in $k[X_2]$. Write 
$$
\ell_j(0,X_2)=X_2^{d} g_j(X_2) \quad \text{for } j=0,\ldots,m_{i}-m_{i-1} 
$$
where $g_j\in k[X_2]$ and integer $d\ge 0$ is such that $X_2$ does not divide $g_e(X_2)$ for some $e$ (we can find such $e$ because $\ell_s(0,X_2)\ne 0$). Thus, $p_i(\ell_0(0,X_2),\ell_1(0,X_2),\ldots,\ell_{m_i-m_{i-1}}(0,X_2))$ is equal to
\begin{eqnarray*}
X_2^{d \deg p_i} p_i(g_0(X_2),g_1(X_2),\ldots,g_{m_i-m_{i-1}}(X_2))
\end{eqnarray*}
and is identically zero. Hence, $p_i(g_0(X_2),g_1(X_2),\ldots,g_{m_i-m_{i-1}}(X_2))$ is identically zero. But then $p_i(g_0(0),g_1(0),\ldots,g_{m_i-m_{i-1}}(0))=0$, which is not possible because $g_e(0)\ne 0$ and $p_i\in P^0_K(m_i-m_{i-1})$. Therefore, $h_i\in I$. Now write
$$
h_{i-1}:=p_{i-2}(\cdots (p_1(f,f_1,\ldots,f_{m_1})\cdots),f_{m_1+1},\ldots,f_{m_2})\cdots),\ldots,f_{m_{i-2}}).
$$
Then $h_{i}=p_{i-1}(h_{i-1},f_{m_{i-2}+1},\ldots,f_{m_{i-1}})\in I$. Since $p_{i-1}\in P^0_K(m_{i-1}-m_{i-2})$, following same steps as above, we obtain $h_{i-1}\in I$. Continuing in this way, we derive that $f\in I$. Thus, $R_3(I)=\left<X_1\right>\ne \I(\ZZ_K(I))$. $\hfill\square$

\vspace{3mm}
We are uncertain whether the above counterexample for $R_1,R_2$ and $R_3$ could also work for $R_4$ to falsify the conjecture. It would be interesting to investigate whether $R_4$ can serve as a substitute for $P_K(m)$ in the $K$-radical for the Nullstellensatz over arbitrary fields, or if it can be proven false. 

\begin{remark}
\rm{Conjecture~\ref{conjLakWes} by Laskov and Westin is motivated by the Nullstellensatz of Dubois~\cite{D} and Risler~\cite{Ri} for real closed fields, where they consider the set $P^0_K(m)$ for $m\ge 0$ with only the trivial zero. Following their Nullstellensatz, Tognoli questioned whether the set $P^0_K(m)$ works for an arbitrary field and it was generally believed that $P^0_K(m)$ will always work, until the work of Laksov. An interested reader can refer to Laksov and Westin~\cite{LW} for more details. }
\end{remark} 

\section*{Acknowledgements}
The author would like to thank her PhD advisor, Professor Sudhir R. Ghorpade, for his invaluable support during this work, especially for bringing reference~\cite{GRS} to her attention.

\end{document}